\documentclass[12pt]{article}
\usepackage{no-ejc}
\usepackage{mathtools}
\usepackage{algorithm2e}



\usepackage{amsmath,amssymb}
\usepackage{enumerate}
\usepackage{graphicx}
\usepackage{tikz}
\usepackage{comment}

\usepackage[colorlinks=true,citecolor=black,linkcolor=black,urlcolor=blue]{hyperref}



%
%



\title{\bf An Eventown Result for Permutations}


\author{
Nathan Lindzey\\
\small Department of Mathematical Sciences\\[-0.8ex]
\small University of Memphis\\[-0.8ex] 
\small\tt nathan.lindzey@memphis.edu
}


\dateline{}{}{}

\MSC{}






\begin{document}

\maketitle
\frenchspacing

\begin{abstract}
A family of permutations $\mathcal{F} \subseteq S_n$ is \emph{even-cycle-intersecting} if $\sigma \pi^{-1}$ has an even cycle for all $\sigma,\pi \in \mathcal{F}$. 
We show that if $\mathcal{F} \subseteq S_n$ is an even-cycle-intersecting family of permutations, then $|\mathcal{F}| \leq 2^{n-1}$, and that equality holds when $n$ is a power of 2 and $\mathcal{F}$ is a double-translate of a Sylow 2-subgroup of $S_n$. This result can be seen as an analogue of the classical eventown problem for subsets and it  confirms a conjecture of J\'anos K\"orner on maximum reversing families of the symmetric group. Along the way, we show that the canonically intersecting families of $S_n$ are also the extremal  \emph{odd-cycle-intersecting} families of $S_n$ for all even $n$. While the latter result has less combinatorial significance, its proof uses an interesting new character-theoretic identity that might be of independent interest in algebraic combinatorics. 
\end{abstract}

\section{Introduction}

An \emph{intersecting family} is a subset $\mathcal{F} \subseteq \mathcal{X}$ of some domain $\mathcal{X}$ such that any two elements of $\mathcal{F}$ intersect non-trivially. The Erd\H{o}s--Ko--Rado theorem states that if $\mathcal{X}$ is the collection of all $k$-sets of an $n$-element set, then for $k < n/2$, an intersecting family $\mathcal{F} \subseteq \mathcal{X}$ has size no greater than $\binom{n-1}{k-1}$, and that equality holds if and only if $\mathcal{F}$ is a family obtained taking all $k$-sets that contain some fixed element $i \in \{1,2,\ldots,n\} =: [n]$. Many other seminal results in extremal combinatorics have since appeared under the umbrella of so-called \emph{Erd\H{o}s--Ko--Rado combinatorics} (see~\cite{GodsilMeagher} for a comprehensive account).

Recently, the study of intersecting families in finite groups has led to several interesting developments in extremal combinatorics~\cite{KupavskiiZ24,KellerLMS24,ErnstS23}. Many types of intersection have been studied (e.g., $t$-intersecting~\cite{EllisFP11}, setwise-intersection~\cite{Ellis11}, $\lambda$-intersection~\cite{DafniFLLV26}). In each of these cases, the notion of intersection is defined with respect to combinatorial properties of the \emph{difference} $gh^{-1}$ between two group elements $g,h \in G$. Groups of course possess more than just combinatorial structure, so one may try to further understand group structure by studying subsets that are extremal with respect to a pairwise algebraic condition. In this vein, we propose an algebraic notion of intersection that depends on the \emph{order} of $gh^{-1}$, which we model as non-adjacency in the following natural graph.

\begin{definition}[The $p$-regular graph of $G$]
 Let $G$ be a finite group and $p$ be a prime dividing the order of $G$. An element $g \in G$ is \emph{$p$-regular} if $|g| \not \equiv 0 \mod p$. Let $G_{p'} \subseteq G$ be its set of $p$-regular elements. We define the \emph{$p$-regular graph} $\Gamma_{p'}(G) := \mathrm{Cay}(G,G_{p'} \! \setminus \! \{\mathrm{id}\})$ to be the Cayley graph of $G$ generated by its non-identity $p$-regular elements $G_{p'} \! \setminus \! \{\mathrm{id}\}$.
\end{definition}

Two elements $g,h \in G$ are non-adjacent in $\Gamma_{p'}(G)$ if $gh^{-1}$ is \emph{$p$-singular}, i.e., $|gh^{-1}| \equiv 0 \mod p$. The independent sets of $\Gamma_{p'}(G)$ are families of pairwise $p$-singular elements of $G$, which we call \emph{(pairwise) $p$-singular families} of $G$. A \emph{maximum $p$-singular family} of $G$ is a $p$-singular family of $G$ of largest size. By design, the \emph{independence number} of $\Gamma_{p'}(G)$, denoted as $\alpha(\Gamma_{p'}(G))$, is the size of a maximum $p$-singular family of $G$.

Since $G_{p'}$ is closed under conjugation by $G$, the $p$-regular graph is a normal Cayley graph; therefore, its eigenvalues are determined by the character theory of $G$ (see~\cite{Diaconis88}, for example). 
Erd\H{o}s and Tur\'an~\cite{ErdosT67b} obtained explicit formulas for the largest eigenvalue (i.e., degree) of $\Gamma_{p'}(G)$ when $G = S_n$ is the \emph{symmetric group} on $n$ symbols. 
Siemons and Zalesski~\cite{SiemonsZ19} consider the complement of $\Gamma_{p'}(G)$, i.e., the Cayley graph generated by its $p$-singular elements $G \setminus G_{p'}$. They were interested in the spectra of such graphs in relation to the problem of constructing Cayley graphs with a singular adjacency matrix. Ebrahimi~\cite{Ebrahimi26} also investigated this graph for $p$-solvable groups in a similar vein. 

Besides being a natural variation of the Erd\H{o}s--Ko--Rado problem for groups, these questions seem to have strong connections to Sylow theory, as one might expect. For example, consider the group $S_n$ and let $p=2$. We say two permutations $\sigma,\pi$ are \emph{even-cycle-intersecting} if $\sigma \pi^{-1}$ has an even cycle, equivalently, $|\sigma \pi^{-1}| \equiv 0 \mod 2$.  Our main result is a tight bound on the size of a maximum $2$-singular family of $S_n$ when $n = 2^\ell$.

\begin{theorem}[Main Result]\label{thm:main}
If $\mathcal{F} \subseteq S_n$ is even-cycle-intersecting, then $|\mathcal{F}| \leq 2^{n-1}$ for all $n \geq 2$.
The bound is sharp when $\mathcal{F}$ is a Sylow 2-subgroup and $n = 2^\ell$ for some $\ell \in \mathbb{N}$.
\end{theorem}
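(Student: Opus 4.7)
The plan is to establish sharpness by explicit construction and to prove the upper bound via the Hoffman ratio bound applied to the normal Cayley graph $\Gamma_{2'}(S_n)$, whose independent sets are precisely the even-cycle-intersecting families.

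Sharpness is immediate. For $n = 2^\ell$, a Sylow $2$-subgroup $P \le S_n$ has order $2^{n-s_2(n)} = 2^{n-1}$, where $s_2(n)$ denotes the binary digit sum of $n$. Every non-identity element of a $2$-group has $2$-power order and is therefore $2$-singular, so $P$ itself is an even-cycle-intersecting family of size $2^{n-1}$. More generally, any double translate $\sigma_1 P \sigma_2$ works, since $(\sigma_1 h \sigma_2)(\sigma_1 h' \sigma_2)^{-1} = \sigma_1 (h h'^{-1}) \sigma_1^{-1}$ is conjugate into $P \setminus \{\mathrm{id}\}$ and hence $2$-singular.

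For the upper bound, since $G_{2'} \setminus \{\mathrm{id}\}$ is conjugation-invariant, $\Gamma_{2'}(S_n)$ is a normal Cayley graph and its adjacency matrix diagonalizes in the group algebra of $S_n$ with eigenvalues parameterized by partitions $\lambda \vdash n$, namely
\[
\eta_\lambda \;=\; \frac{1}{f^\lambda}\sum_{\substack{\mu \vdash n \\ \mu \text{ odd}}}\frac{n!}{z_\mu}\,\chi^\lambda(\mu) \;-\; 1,
\]
where $f^\lambda = \chi^\lambda(1)$, $z_\mu$ is the centralizer size of a permutation of cycle type $\mu$, and ``odd'' means all parts are odd. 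Writing $d = \eta_{(n)}$ for the valency and $\tau = \min_\lambda \eta_\lambda$ for the least eigenvalue, the Hoffman ratio bound gives $|\mathcal{F}| \le -\tau\cdot n!/(d-\tau)$, and the target bound $|\mathcal{F}| \le 2^{n-1}$ is equivalent to the spectral inequality $\tau \le -d\cdot 2^{n-1}/(n! - 2^{n-1})$.

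The main obstacle will be to locate the minimizing partition $\lambda^\ast$ and evaluate the character sum $\sum_{\mu\ \mathrm{odd}}(n!/z_\mu)\chi^\lambda(\mu)$ sharply enough to confirm this inequality. This sum is exactly the restriction of $\chi^\lambda$ to the $2$-regular part of $S_n$ and is intimately connected to Schur's theory of $Q$-functions and the projective representations of $S_n$, since odd partitions are what index the basic spin characters; a natural first guess for $\lambda^\ast$ is a hook or self-conjugate shape. I expect the new character-theoretic identity flagged in the abstract (developed in the paper for the odd-cycle analogue) to supply the key algebraic input, with a supplementary clique--coclique / coset argument exploiting the $2$-group structure confirming tightness when $n$ is a power of $2$.
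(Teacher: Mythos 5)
Your sharpness argument is correct and matches the paper: $|P_n| = 2^{n-s_2(n)} = 2^{n-1}$ when $n=2^\ell$, every non-identity element of a $2$-group is $2$-singular, and double translates work because $(\sigma_1 h\sigma_2)(\sigma_1 h'\sigma_2)^{-1}$ is conjugate to $hh'^{-1}$. The upper bound, however, has a genuine gap, and the route you sketch would not close it. You propose applying Delsarte--Hoffman to the \emph{unweighted} adjacency matrix of $\Gamma_{2'}(S_n)$ and reduce the problem to showing $\tau \le -d\cdot 2^{n-1}/(n!-2^{n-1})$, which you leave open as ``the main obstacle.'' This inequality is false: the paper notes explicitly that for small $n$ the unweighted Delsarte--Hoffman bound can be checked computationally to give rather poor bounds on $\alpha(\Gamma_{2'}(S_n))$, nowhere near $2^{n-1}$. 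Heuristically, the valency $d$ is of order $n!/\sqrt{n}$ (Erd\H{o}s--Tur\'an), so you would need $|\tau|$ to be as small as roughly $2^{n-1}/\sqrt{n}$, which does not happen. So the step you defer is not merely unfinished; the whole unweighted approach must be abandoned.

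The missing idea is a specific \emph{edge weighting}. The paper uses the sum-of-hooks character $H_n := \sum_{k=0}^{n-1}\chi^{(n-k,1^k)}$, which (by the Murnaghan--Nakayama computation in Theorem~\ref{thm:hooks}, due to Regev) satisfies $H_n(\sigma)=2^{\#\mathrm{cyc}(\sigma)-1}$ if $\sigma$ has no even cycle and $H_n(\sigma)=0$ otherwise. Hence the matrix $\mathrm{OC}_n := (H_n(\sigma\pi^{-1}))_{\pi,\sigma}$ is supported exactly on the identity and the $2$-regular elements, $H_n(\mathrm{id})=2^{n-1}$, and $\mathrm{OC}_n - 2^{n-1}I$ is a weighted adjacency matrix of $\Gamma_{2'}(S_n)$ whose eigenvalues are $H_{(n-k,1^k)} - 2^{n-1}$ on the hook isotypic components (where $H_\lambda$ is the hook product, maximal value $n!$ at $\lambda=(n)$) and exactly $-2^{n-1}$ on all non-hook components. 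Delsarte--Hoffman then gives $n!\cdot 2^{n-1}/(n!-2^{n-1}+2^{n-1}) = 2^{n-1}$ on the nose. Note also that you point at the wrong identity: the ``new'' two-row identity $B_n$ from the abstract is used for the \emph{odd}-cycle-intersecting problem, while the main theorem rests on the hook identity $H_n$; and Schur $Q$-functions play no role. Your instinct that hooks are the relevant shapes is right, but the resolution is to build the weighting from the hook characters rather than to estimate the least eigenvalue of the unweighted graph.
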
 
\noindent The main result implies a symmetric group analogue of the \emph{eventown problem} for subsets. A family $\mathcal{F} \subseteq 2^{[n]}$ is said to be an \emph{eventown} if $| \pi | \equiv 0 \mod 2$ for each $\pi \in \mathcal{F}$ and $| \sigma \cap \pi | \equiv 0 \mod 2$ for all $\sigma, \pi \in \mathcal{F}$ (see~\cite[Ch.~1]{BF92}, for example). 
Answering a question of Erd\H{o}s, Berlekamp~\cite{Berlekamp69} showed that the maximum size of an eventown is $2^{\lfloor n/2 \rfloor}$, and that equality is attained by taking all subsets $S \in 2^{[n]}$ such that $2i \in S \Leftrightarrow 2i-1 \in S$ for all $1 \leq i \leq n/2$. One can define a family $\mathcal{F} \subseteq S_n$ to be an \emph{eventown} if $| \pi | \equiv 0 \mod 2$ for non-trivial $\pi \in \mathcal{F}$ and $| \sigma \pi^{-1} | \equiv 0 \mod 2$ for all $\sigma, \pi \in \mathcal{F}$. Here, size and intersection are naturally replaced by order and difference, and we see that a Sylow 2-subgroup is an eventown of maximum size provided that $n$ is a power of 2. 

A family $\mathcal{F} \subseteq S_n$ is \emph{2-set-intersecting} if and only if for any two
permutations $\sigma, \pi \in \mathcal{F}$, there exists $\{i, j\} \subseteq [n] := \{1,2,\ldots,n\}$ such that one of the following conditions holds: 
\begin{align}
\sigma(i) = \pi(i) \quad &\text{   and   } \quad  \sigma(j) = \pi(j) \\
 \quad \sigma(i) = \pi(j) \quad &\text{  and   } \quad \sigma(j) = \pi(i)~.
\end{align}
Answering a question of K\"orner, Ellis showed for $n$ sufficiently large, that the largest 2-set-intersecting families in $S_n$ are the double-translates of the Young subgroup $S_{n-2} \times S_2$. This was later shown to hold for all $n \geq 6$ by Meagher and Razafimahatratra, and Chase, Dafni, Filmus, and Lindzey~\cite{MeagherR21,2perm}.

If we stipulate that only condition~(1) above occurs, then
we are insisting that the family is \emph{2-intersecting}. Ellis, Friedgut, and Pilpel showed for 
sufficiently large $n$ that the largest 2-intersecting families in $S_n$ are the double-translates of the Young subgroup $S_{n-2} \times S_1 \times S_1$. This too was shown to hold for all $n \geq 5$~\cite{MeagherR21,2perm}.

K\"orner asks what happens if we stipulate that only condition (2) occurs. Here, we say a family $\mathcal{F} \subseteq S_n$ is \emph{reversing}  if condition (2) holds for any $\sigma, \pi \in \mathcal{F}$. K\"orner conjectured that such a family cannot have size greater than $C^n$ for some absolute constant $C$, and gave an eventown-type construction for a reversing family of size $2^{\lfloor n/2 \rfloor}$ by taking a largest elementary Abelian 2-subgroup of $S_n$. F\"{u}redi, Kantor, Monti, and Sinaimeri~\cite{FurediKMS10,FurediKMS11} gave a lower bound of $8^{\lfloor n/5\rfloor}$which Harcos and Solt\'esz~\cite{HarcosS20} later improved to $4^{\lfloor n/3 \rfloor}$. F\"{u}redi et al.~\cite{FurediKMS10,FurediKMS11} also considered \emph{reverse-free} families of permutations, i.e., families of permutations such that condition (2) does not hold for any pair of members, wherein they also pose the question of determining the size of a maximum reversing family. Leveraging the fact that constructions of reverse-free families give rise to upper bounds on reversing families, Cibulka~\cite{Cibulka13} improved the results of F\"{u}redi et al., obtaining an upper bound of $n^{n/2 + O(n/\log_2 n)}$ on the size of a maximum reversing family. 
We refer the reader to~\cite{HarcosS20,CohenFK19,ByrneT24} for other lines of combinatorial inquiry surrounding K\"orner's question.

Since a reversing family is even-cycle-intersecting, Theorem~\ref{thm:main} proves the following conjecture of K\"orner communicated by Ellis~\cite{ellis2019,Ellis11} with $C = 2$.
\begin{theorem}\label{con:korner}\emph{\cite[Conjecture~4]{Ellis11}}
There exists an absolute constant $C$ such that for any $n \in \mathbb{N}$, a reversing family in $S_n$ has size at most $C^n$.
\end{theorem}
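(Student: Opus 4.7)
The plan is to deduce Theorem~\ref{con:korner} directly from Theorem~\ref{thm:main}, which we may assume. The entire content of the deduction is the observation that any reversing family is automatically even-cycle-intersecting, so that the main result applies and yields the desired exponential bound with constant $C = 2$.

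First I would carry out the short cycle-type computation at the heart of the reduction. Fix distinct $\sigma, \pi \in S_n$ and suppose the two-element subset $\{i, j\} \subseteq [n]$ witnesses condition~(2), i.e., $\sigma(i) = \pi(j)$ and $\sigma(j) = \pi(i)$. Multiplying on the left by $\pi^{-1}$ gives
\[
\pi^{-1}\sigma(i) = j \quad \text{and} \quad \pi^{-1}\sigma(j) = i,
\]
so the transposition $(i\,j)$ appears as a $2$-cycle of $\pi^{-1}\sigma$. Since $\sigma\pi^{-1}$ and $\pi^{-1}\sigma$ are conjugate (via $\pi$) and hence have the same cycle type, the permutation $\sigma\pi^{-1}$ also contains an even cycle, giving $|\sigma\pi^{-1}| \equiv 0 \pmod 2$. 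Thus every reversing family $\mathcal{F} \subseteq S_n$ is even-cycle-intersecting, so Theorem~\ref{thm:main} yields $|\mathcal{F}| \leq 2^{n-1} \leq 2^n$, which is K\"orner's conjecture with $C = 2$.

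The genuinely hard work lies in Theorem~\ref{thm:main}, not in this corollary: once the main result is in hand, the reversing case requires no further combinatorial, representation-theoretic, or algebraic input beyond the one-line identity above. The only point meriting a sentence of care in the write-up is the implicit assumption that $i \neq j$ in condition~(2) — the paper's use of $\{i, j\} \subseteq [n]$ as an unordered pair already enforces this, and it is needed so that one genuinely extracts a $2$-cycle rather than a fixed point of $\pi^{-1}\sigma$. It is also worth noting that the constant $C = 2$ obtained this way is optimal within the scope of the reduction: improving $C$ would require a strengthening of the reversing-to-even-cycle-intersecting implication rather than a strengthening of Theorem~\ref{thm:main}.
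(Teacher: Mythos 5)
Your proposal is correct and is essentially the paper's own argument: the paper deduces Theorem~\ref{con:korner} in one sentence by observing that a reversing family is even-cycle-intersecting and invoking Theorem~\ref{thm:main} with $C=2$. Your write-up simply makes explicit the (correct) cycle-type computation showing that condition~(2) forces a $2$-cycle in $\pi^{-1}\sigma$, hence an even cycle in the conjugate $\sigma\pi^{-1}$.
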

\noindent It is worth noting that the resolution of K\"orner's conjecture has a few other combinatorial implications, see~\cite[\S 5]{HarcosS20} for these details. We find Theorem~\ref{thm:main} and its proof to be somewhat surprising for the following reasons. 

Let $S$ be the union of all Sylow 2-subgroups of $S_n$. The order of any element of $S$ is a power of 2, a much more restrictive condition than just being 2-singular. Erd\H{o}s and Tur\'an~\cite{ErdosT67b} showed the probability drawing a $2$-singular element of $S_n$ uniformly at random is at least $(1 - 3/\sqrt{n})$.
The foregoing shows that despite the preponderance of 2-singular elements, we have
$\alpha(\Gamma_{2'}(S_{2^\ell})) = \alpha(\mathrm{Cay}(S_{2^\ell}, \bar{S}))$, even though $\Gamma_{2'}(S_{2^\ell}) \subset \mathrm{Cay}(S_{2^\ell}, \bar{S})$ is a significantly sparser graph. One can compare this situation to Mantel's theorem, in that the seemingly more restrictive condition of forbidding odd cycles turns out to be just as restrictive as forbidding triangles. 

In extremal combinatorics, when the extremal families consist of all objects containing a fixed copy of some substructure, then we say such families are of \emph{kernel-type}, also known as \emph{dictators}, \emph{juntas}, and \emph{trivially (canonically) intersecting families} in Erd\H{o}s--Ko--Rado combinatorics. A common theme in Erd\H{o}s--Ko--Rado combinatorics is that for sufficiently large $n$, the largest $t$-intersecting families are of kernel-type. Eigenvalue techniques have played a particularly crucial role in the development of this area~\cite{GodsilMeagher}. Theorem~\ref{thm:main} is a rare example where eigenvalue techniques produce exact results for an intersection problem where the extremal families are not of kernel-type.

For primes $p \neq 2$, one can verify for small $n$ that the Sylow $p$-subgroups of $S_n$ are not always maximum $p$-singular families, which might seem unexpected if it weren't for the fact that the Sylow $2$-subgroups of symmetric groups often possess exceptional properties. Indeed, there has been a recent flurry of activity concerning their properties. For example, Diaconis, Giannelli, Guralnick, Law, Navarro, Sambale, and Spink~\cite{DiaconisGGLNSS25} show that a pair of random Sylow $p$-subgroups of $S_n$ almost always intersect trivially for $p \neq 2$ and $n \rightarrow \infty$, whereas if $p = 2$, then such pairs intersect non-trivially with probability at least $(1-\sqrt{e}+o(1))$. Eberhard~\cite{Eberhard25} later gave a matching upper bound and Renteln~\cite{Renteln25} found a good algorithm for counting Sylow $p$-subgroup double cosets of $S_n$. 

As we prove our main result, we point out along the way some perhaps overlooked connections between bijective combinatorics and character theory. For example, we prove a new character-theoretic identity (Theorem~\ref{thm:2row}) that is complementary to a result of Regev~\cite[Proposition 1.1]{Regev13}, leading to a characterization of the extremal \emph{odd-cycle-intersecting} families of $S_n$. Our results also have some connection to \emph{Steinberg-like characters} recently introduced by Malle and Zalesski~\cite{MalleZ20}. We conclude with a few conjectures, some open problems, and directions for future work.

\section{Preliminaries}
We give a brief overview of some standard bounds on the independence numbers of graphs. For any graph $X = (V,E)$, it is well known that the \emph{Lov\'asz $\vartheta$-function} $\vartheta(X)$ gives an SDP relaxation of the independence number of $X$:
\begin{align}\label{eq:sdp}
	\alpha(X) \leq \vartheta(X) = 
	\begin{cases}
	\min \theta \\
	\text{$A \succeq 0$}, \quad 	A \in \mathrm{Mat}^{n \times n}(\mathbb{R}) \\
	A_{i,i} = \theta - 1 \text{ for every } i \in V\\
	A_{i,j} = -1 \text{ if $i \not \sim j$}~.
	\end{cases}
\end{align}
To prove Theorem~\ref{thm:main}, we solve Program (\ref{eq:sdp}) for the 2-regular graph of $S_n$ for all $n$ that are powers of 2 (in fact, we give an integral extreme point of the associated spectahedron). 

If $X$ is a regular graph, then we can obtain a spectral upper bound on $\vartheta(X)$ via the following well-known result of Delsarte and Hoffman. 
\begin{theorem}[Delsarte--Hoffman]\label{thm:ratioInd}
Let $A$ be a weighted adjacency matrix of a regular graph $X = (V,E)$ with constant row sum. Let $\eta_{\max}$ and $\eta_{\min}$ be the greatest and least eigenvalue of $A$ respectively. Then 
$$\alpha(X) \leq  \vartheta(X) \leq |V|  \left( \frac{-\eta_{\min} }{\eta_{\max}-\eta_{\min}} \right).$$
Moreover, if equality holds, then the characteristic vector $1_S \in \mathbb{R}^{V}$ of $S$ lives in the direct sum of the eigenspaces of $A$ corresponding to $\eta_{\max}$ and $\eta_{\min}$.
\end{theorem}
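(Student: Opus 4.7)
The inequality $\alpha(X) \leq \vartheta(X)$ will be immediate from the SDP~\eqref{eq:sdp}: given an independent set $S$ and any feasible $A$ with value $\theta$, the principal submatrix $A[S]$ has diagonal entries equal to $\theta-1$ and off-diagonal entries equal to $-1$ (since $S$ contains no edges), so $A[S] = \theta I - J$. As a principal submatrix of a PSD matrix it is PSD, which forces $\theta \geq |S|$, and taking a minimum over $\theta$ and a maximum over $S$ yields the first inequality.

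For the ratio bound $\vartheta(X) \leq \tau := |V|(-\eta_{\min})/(\eta_{\max}-\eta_{\min})$, the plan is to exhibit an explicit feasible matrix attaining objective value $\tau$. Set $c := |V|/(\eta_{\max}-\eta_{\min}) > 0$ and consider
\[
A \;:=\; c\, A' \;+\; \tau I \;-\; J.
\]
Since $A'$ has zero diagonal and vanishes on non-edges (as a weighted adjacency matrix), one checks that $A_{ii} = \tau - 1$ and $A_{ij} = -1$ on every non-edge, matching the SDP constraints. The constants $c$ and $\tau$ are rigged precisely so that $A \succeq 0$: regularity makes $\mathbf{1}$ an eigenvector of $A'$ with eigenvalue $\eta_{\max}$, and a short check gives $A\mathbf{1} = (c\eta_{\max}+\tau-|V|)\mathbf{1} = 0$; on $\mathbf{1}^{\perp}$ the matrix $J$ vanishes and $A$ acts as $cA' + \tau I$, whose eigenvalues $c\eta+\tau$ satisfy $c\eta+\tau \geq c\eta_{\min}+\tau = 0$.

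For the moreover-clause I would not re-engineer the SDP but argue directly. Let $S$ achieve equality and expand $\mathbf{1}_S = \sum_i \alpha_i v_i$ in an orthonormal eigenbasis of $A'$ with $v_0 = \mathbf{1}/\sqrt{|V|}$. Then $\alpha_0 = |S|/\sqrt{|V|}$, $\sum_{i\neq 0}\alpha_i^2 = |S| - |S|^2/|V|$, and the independence of $S$ gives
\[
0 \;=\; \mathbf{1}_S^{\top} A' \mathbf{1}_S \;=\; \eta_{\max}\frac{|S|^2}{|V|} \;+\; \sum_{i\neq 0}\alpha_i^2 \eta_i.
\]
Lower-bounding the tail by $\eta_{\min}\bigl(|S|-|S|^2/|V|\bigr)$ reproduces the ratio bound, and equality forces $\alpha_i = 0$ for every $\eta_i \in (\eta_{\min},\eta_{\max})$, placing $\mathbf{1}_S \in \ker(A'-\eta_{\max}I) \oplus \ker(A'-\eta_{\min}I)$. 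No serious obstacle is anticipated, since this is a classical result; the only point requiring care is reading ``weighted adjacency matrix'' as having zero diagonal and support on the edge set, so that the candidate $A$ matches the SDP's off-diagonal equality constraints exactly.
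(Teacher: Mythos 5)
Your proposal is correct, but there is nothing in the paper to compare it against: the paper states Theorem~\ref{thm:ratioInd} as a classical result of Delsarte and Hoffman and offers no proof. Your argument is a valid self-contained one, and it is nice that it proves both inequalities of the displayed chain: the feasibility of $\theta I - J$ on an independent set gives $\alpha \leq \vartheta$, the explicit matrix $cA' + \tau I - J$ certifies $\vartheta(X) \leq \tau$ (the constants do check out: $c\eta_{\max} + \tau - |V| = 0$ and $c\eta_{\min} + \tau = 0$), and the eigenbasis expansion of $\mathbf{1}_S$ gives the moreover-clause. Two small points. First, you silently identify the constant row sum of $A'$ with $\eta_{\max}$; this is how the paper intends the hypothesis to be read (and holds automatically for the nonnegative weightings used later via Perron--Frobenius), but strictly speaking ``constant row sum'' only makes $\mathbf{1}$ an eigenvector, so you should state the assumption that this eigenvalue is the largest one, as well as $\eta_{\min} < 0 < \eta_{\max}$ so that $c, \tau > 0$. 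Second, in the equality analysis the condition you actually extract is $\alpha_i = 0$ for all $i \neq 0$ with $\eta_i \neq \eta_{\min}$, which is slightly stronger than vanishing only for $\eta_i \in (\eta_{\min}, \eta_{\max})$; either version suffices for the stated conclusion that $\mathbf{1}_S$ lies in the sum of the two extreme eigenspaces, so this is a matter of precision rather than a gap.
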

If $X = \mathrm{Cay}(G,S)$ is a normal Cayley graph, i.e., its generating set $S \subseteq G$ is closed under conjugation, then by Schur's lemma, we can determine its eigenvalues by evaluating irreducible characters (see~\cite{Diaconis88}, for example).
\begin{theorem}
Let $G$ be a finite group and $\{\chi^{(i)}\}_{i=1}^m$ be the set of irreducible characters of $G$. Then the eigenvalues of the adjacency matrix of a normal Cayley graph $X = \mathrm{Cay}(G, S)$ are given by
$$\eta^{(i)} = \frac{1}{\chi^{(i)}(1)} \sum_{s \in S} \chi^{(i)}(s) \quad \text{ for all } i = 1,\ldots,m$$ 
where the multiplicity of $\eta^{(i)}$ is $\chi^{(i)}(1)^2$.
\end{theorem}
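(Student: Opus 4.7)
The plan is to realize the adjacency matrix $A$ of the normal Cayley graph $X = \mathrm{Cay}(G,S)$ as the action of a central element of the group algebra $\mathbb{C}[G]$ on the regular representation. Indexing rows and columns of $A$ by elements of $G$, one has $A = \sum_{s \in S} R(s)$, where $R \colon G \to \mathrm{GL}(\mathbb{C}[G])$ is the right regular representation, defined by $R(s)\, e_g = e_{gs^{-1}}$, so that $(R(s))_{g,h} = 1$ iff $h = gs^{-1}$ iff $gh^{-1} = s$. The hypothesis that $S$ is closed under conjugation means exactly that $z := \sum_{s \in S} s$ lies in the center $Z(\mathbb{C}[G])$.

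Next I would decompose the regular representation into isotypic components. By Maschke and the standard decomposition of the group algebra, $\mathbb{C}[G] \cong \bigoplus_i V_i^{\oplus d_i}$, where $V_i$ ranges over the irreducible representations with corresponding characters $\chi^{(i)}$ and dimensions $d_i = \chi^{(i)}(1)$. Since $z$ is central, Schur's lemma implies that on each copy of $V_i$ the operator $\rho_i(z) = \sum_{s \in S} \rho_i(s)$ acts as a scalar $\eta^{(i)} \cdot I_{V_i}$. To identify this scalar, take the trace: on the one hand, $\mathrm{tr}(\rho_i(z)) = \eta^{(i)} \cdot d_i$, and on the other, $\mathrm{tr}(\rho_i(z)) = \sum_{s \in S} \chi^{(i)}(s)$. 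Solving gives $\eta^{(i)} = \frac{1}{\chi^{(i)}(1)} \sum_{s \in S} \chi^{(i)}(s)$, as claimed.

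Finally I would read off multiplicities. The $V_i$-isotypic component of $\mathbb{C}[G]$ has total dimension $d_i^2$, and on it $A$ acts as $\eta^{(i)} \cdot I$, so $\eta^{(i)}$ is contributed with multiplicity $\chi^{(i)}(1)^2$ (to be combined across $i$'s whenever different $\eta^{(i)}$'s happen to coincide as complex numbers). The main thing to get right is not any deep obstacle but simply a convention check: one has to confirm that the expression $\sum_{s \in S} R(s)$ really is the adjacency matrix of the undirected graph $X$ (which uses the implicit fact that $S = S^{-1}$, else one would only obtain a digraph), and that the symmetry of $A$ matches the character-theoretic expression being real when $S = S^{-1}$. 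Everything else is a direct application of Schur's lemma and the standard structure of $\mathbb{C}[G]$.
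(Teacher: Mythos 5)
Your proof is correct and is exactly the standard Schur's-lemma argument that the paper itself invokes but does not write out (it only cites Diaconis for this fact), so there is nothing to compare against beyond noting that you have supplied the expected details. One small bookkeeping slip: with $R(s)e_g = e_{gs^{-1}}$ the condition $h = gs^{-1}$ is equivalent to $h^{-1}g = s$, not $gh^{-1} = s$ (the two differ by conjugation by $g$); this is harmless here because after summing over the conjugation-closed, symmetric set $S$ the conditions $h^{-1}g \in S$ and $gh^{-1} \in S$ coincide, but it is worth stating the adjacency convention you are matching.
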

\noindent When $G$ is a symmetric group, its characters can be computed combinatorially as follows. Let $\chi^\lambda$ denote the character of $S_n$ corresponding to the shape $\lambda \vdash n$. A skew shape $\lambda / \nu$ is a \emph{border strip} if it is connected and does not contain the shape $\boxplus$. The \emph{leg-length} $ll(\lambda / \nu)$ of a border strip $\lambda / \nu$ is the number of rows minus 1.
\begin{theorem}[Murnaghan--Nakayama]\label{thm:mn} Let $\sigma \in S_n$ be a permutation with cycle type $\mu \vdash n$. Let $\alpha = \alpha_1,\alpha_2, \ldots, \alpha_\ell$ be an integer composition of $n$ that is any reordering of $\mu$. Let $\alpha' = \alpha_2, \ldots, \alpha_\ell$ and $\lambda \vdash n$. Then 
$$
	\chi^{\lambda}(\sigma) = \chi^{\lambda}_\mu = \chi^{\lambda}_\alpha = \sum_{\nu} (-1)^{ll(\lambda /\nu)} \chi^{\nu}_{\alpha'}
$$
where the sum ranges over all $\nu$ such that $\lambda / \nu$ is a border strip on $\alpha_1$ cells.
\end{theorem}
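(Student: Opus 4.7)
The plan is to derive Murnaghan--Nakayama from Frobenius's character formula together with the one-row multiplication rule $p_k \cdot s_\nu = \sum_\lambda (-1)^{ll(\lambda/\nu)} s_\lambda$, summed over partitions $\lambda$ such that $\lambda/\nu$ is a border strip with $k$ cells. The stated recursion would then follow by peeling the first part $\alpha_1$ off the composition.

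First I would recall the Frobenius formula $p_\mu = \sum_{\lambda \vdash n} \chi^\lambda_\mu\, s_\lambda$, which is equivalent to the definition of the characteristic isometry $\mathrm{ch}: R(S_n) \to \Lambda_n$ sending an irreducible character $\chi^\lambda$ to the Schur function $s_\lambda$ and the indicator class function of cycle type $\mu$ to $z_\mu^{-1} p_\mu$. In particular $\chi^\lambda_\mu$ is the coefficient of $s_\lambda$ in the Schur expansion of $p_\mu$, and the right-hand side of the theorem is unchanged if we reorder $\mu$ because $p_\mu$ factors as a commutative product of power sums.

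Next I would prove the one-row rule. The cleanest route uses the Jacobi--Trudi identity $s_\nu = \det(h_{\nu_i - i + j})_{1\leq i,j \leq \ell(\nu)}$ combined with the elementary recurrence $p_k \cdot h_j = h_{j+k} - h_{j+k-1}e_1 + h_{j+k-2}e_2 - \cdots$ (equivalent to Newton's identities rewritten in the $\{h,e,p\}$-generators of $\Lambda$). Multiplying the determinant by $p_k$ and distributing over rows yields a signed sum of determinants in which a single row has been shifted; after identifying each shifted determinant with $\pm s_\lambda$ via row swaps, one finds that the surviving terms are exactly those in which the shifted row corresponds to adding a connected ribbon of $k$ cells to $\nu$, with the sign equal to $(-1)^{(\text{number of row swaps})} = (-1)^{ll(\lambda/\nu)}$.

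Finally I would iterate. Writing $p_\mu = p_{\alpha_1} \cdot p_{\alpha'}$ where $\alpha' = \alpha_2, \ldots, \alpha_\ell$ and applying the one-row rule to the outermost factor gives
\[
p_\mu = \sum_{\nu} (-1)^{ll(\lambda/\nu)} p_{\alpha'} \cdot s_\nu,
\]
with the outer sum over partitions $\lambda$ containing $\nu$ with $\lambda/\nu$ a border strip on $\alpha_1$ cells. Comparing the coefficient of $s_\lambda$ on both sides, using Frobenius on the left to produce $\chi^\lambda_\mu$ and Frobenius on $p_{\alpha'}\cdot s_\nu$ on the right to produce $\chi^\nu_{\alpha'}$, yields precisely the stated recursion. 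The main obstacle is the one-row identity itself; once the sign bookkeeping in the determinant expansion is organized correctly (or, alternatively, carried out via a sign-reversing involution on pairs of a border strip and a partial border-strip tableau), the iteration and the reduction to Frobenius's formula are formal.
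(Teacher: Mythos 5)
The paper does not prove this statement: Theorem~\ref{thm:mn} is the classical Murnaghan--Nakayama rule, stated in the Preliminaries as known background and used as a black box (e.g., in the proofs of Theorems~\ref{thm:hooks} and~\ref{thm:2row}). So there is no in-paper proof to compare against, and your proposal must be judged on its own. Your overall architecture is the standard textbook derivation --- Frobenius's formula $p_\mu = \sum_\lambda \chi^\lambda_\mu s_\lambda$, the border-strip Pieri rule $p_k \cdot s_\nu = \sum_\lambda (-1)^{ll(\lambda/\nu)} s_\lambda$, then peeling off $\alpha_1$ and comparing coefficients of $s_\lambda$ --- and that skeleton, including the observation that commutativity of the $p_{\alpha_i}$ justifies reordering $\mu$, is correct and complete modulo the one-row rule.

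The genuine gap is in your proposed proof of that one-row rule. The identity you invoke, $p_k \cdot h_j = h_{j+k} - h_{j+k-1}e_1 + h_{j+k-2}e_2 - \cdots$, is false as stated: taking $k=2$, $j=0$ gives $h_2 - h_1 e_1 = -e_2 \neq p_2$ (and other truncations fare no better). The correct Newton-type identity carries coefficients, $p_k = \sum_{i=0}^{k-1} (-1)^i (k-i)\, h_{k-i} e_i$, and it expresses $p_k$ alone rather than $p_k h_j$, so the row-shifting bookkeeping you describe does not go through as written. The standard repairs are either (a) Macdonald's bialternant computation $p_k \cdot a_{\nu+\delta} = \sum_{j} a_{\nu+\delta+k\epsilon_j}$ followed by straightening, where the sign of each surviving term is $(-1)^{ll(\lambda/\nu)}$ by counting transpositions, or (b) the hook expansion $p_k = \sum_{i=0}^{k-1}(-1)^i s_{(k-i,1^i)}$ combined with the Littlewood--Richardson rule for multiplying by a hook. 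Either replaces your faulty lemma and the rest of your argument then closes correctly. You did flag this step as the main obstacle and sketched a sign-reversing-involution alternative, which is also viable, but as submitted the key lemma is unproven and the one concrete identity offered for it is wrong.
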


Let $G = S_n$, $S = G_{2'} \setminus \{\mathrm{id}\}$, and recall that $\Gamma_{2'}(S_n) = \mathrm{Cay}(G,S) =: X$. The standard approach to prove Theorem~\ref{thm:main} at this juncture would be to use the Murnaghan--Nakayama rule to determine the least eigenvalue of the adjacency matrix of $X$, and observe that Delsarte--Hoffman bound is met with equality. For small $n$, one can verify computationally that this approach does not work, and that Delsarte--Hoffman in fact gives rather poor bounds on the independence number of the 2-regular graph of $S_n$. This leads one to consider the myriad of edge weightings of $X$, and the difficulty lies in finding a weighting such that the eigenvalues of the weighted adjacency matrix meet the Delsarte--Hoffman bound with equality. We find this proverbial needle in a haystack by choosing an edge weighting of $X$ informed by a curious folklore bijection on permutations, that evidently has a strong connection to Sylow 2-subgroups of the symmetric group.

\section{Bijections}\label{sec:bijections}

We begin by recalling a complementary pair of what appear to be folklore identities on permutations, which we record here for posterity (see~\cite{Chen24, HopkinsMO, YuanMO} for more discussion). The first identity concerns elements of the symmetric group $S_n$ that have only odd cycles: 
\begin{align}\label{eq:odd}
	n! = \sum_{\substack{\sigma \in S_n \\ |\sigma_i| \not \equiv 0 \text{ mod } 2~\forall i}} \!\!\!\! 2^{\#\mathrm{cyc}(\sigma)-1}
\end{align}
where $\sigma = \sigma_1\sigma_2\cdots \sigma_m$ are the disjoint cycles that compose $\sigma$ and $\#\text{cyc}(\sigma)=m$. To see this, we canonically order the permutations of $S_n$ as follows. First, cyclically shift each cycle so that the largest element of each cycle appears first. Next, order the disjoint cycles in ascending order according to their first symbol. We say such a permutation is \emph{canonically ordered}. To each canonically ordered $\sigma = \sigma_1\sigma_2\cdots \sigma_m \in S_n$ with only odd cycles, we associate any bitstring $b = b_1b_2\cdots b_{m-1}$ of length $m-1$. We interleave the cycles and bits
\[
\sigma_1 b_1 \sigma_2 b_2 \cdots \sigma_{m-1} b_{m-1} \sigma_m
\]
and process them from left to right according to the following rule: if $b_i = 1$, then remove the last symbol from $\sigma_i$ and place it at the end of $\sigma_{i+1}$; otherwise, we do nothing to $\sigma_i$. The resulting permutation $\pi \in S_n$ is canonically ordered. Moreover, given a canonically ordered permutation $\pi \in S_n$, one can reverse this process to obtain a unique permutation $\sigma$ with $m$ cycles, all of odd length, paired with a unique bitstring $b_1 \cdots b_{m-1}$. This shows the procedure above gives a bijection, and so Equation~(\ref{eq:odd}) follows.

A similar identity holds for elements of $S_n$ consisting of only even cycles, provided that $n$ is even:
\begin{align}\label{eq:even}
	n! =  \sum_{\substack{\sigma \in S_n \\ |\sigma_i| \equiv 0 \text{ mod } 2~\forall i}}  \!\!\!\! 2^{\#\mathrm{cyc}(\sigma)}.
\end{align}
The bijection is the same as before, only now we associate a bit to each cycle.

Equation~(\ref{eq:odd}) can be generalized at the level of characters. Let $h_{k,n} := \chi^{(n-k,1^k)}$ and define $H_n := \sum_{k=0}^{n-1} h_{k,n}$ to be the \emph{sum of the hooks character}. The theorem below follows from the Murnaghan--Nakayama rule; however, it was first shown by Regev~\cite{Regev13} in a more general form using the theory of Lie superalgebras. Taylor~\cite{Taylor17} later gave another proof of Regev's general result using skew characters of the symmetric group, and then Wildon~\cite{WildonBlog} gave a short proof of the theorem below using exterior algebras. We give our elementary proof here for completeness, claiming no originality.
\begin{theorem}\label{thm:hooks} For all $\sigma \in S_n$, we have
	$$
		H_n (\sigma) = 
		\begin{cases}
		2^{\#\mathrm{cyc}(\sigma) - 1} \quad& \text{ if } \sigma \text{ has no even cycle};\\
		0 \quad& \text{ otherwise.}
		\end{cases} 
	$$
\end{theorem}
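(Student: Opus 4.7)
The plan is to proceed by induction on $n$ using the Murnaghan--Nakayama rule (Theorem~\ref{thm:mn}). The base case $n = 1$ is immediate: $H_1 = \chi^{(1)}$ evaluates to $1$ on the identity, which matches $2^{\#\mathrm{cyc}-1}$.

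For the inductive step, fix $\sigma \in S_n$, pick any cycle of $\sigma$ of length $m$, and let $\sigma' \in S_{n-m}$ denote the permutation on the remaining $n - m$ symbols. The crucial combinatorial input is a classification of the border strips of size $m$ in the hook $\lambda = (n-k,1^k)$. A direct check on the geometry of hooks yields exactly three possibilities: (a) peel $m$ cells off the right of the arm, leaving the sub-hook $(n-k-m,1^k)$ with leg-length $0$; (b) peel $m$ cells off the bottom of the leg, leaving the sub-hook $(n-k,1^{k-m})$ with leg-length $m-1$; and (c) only when $m = n$, remove the whole hook with $\nu = \emptyset$ and leg-length $k$. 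In particular, no border strip that crosses the corner $(1,1)$ and spans both arm and leg can leave behind a valid partition, except in the degenerate full-hook case.

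Assume first $m < n$. Applying Murnaghan--Nakayama to each summand of $\sum_{k=0}^{n-1}\chi^{(n-k,1^k)}(\sigma)$ and collecting by type, the type-(a) terms (over $0 \le k \le n-m-1$) sum to $H_{n-m}(\sigma')$, and the type-(b) terms (over $m \le k \le n-1$), re-indexed by $j = k - m$, contribute $(-1)^{m-1} H_{n-m}(\sigma')$. Values of $k$ for which neither case applies produce zero by Murnaghan--Nakayama. This yields the recursion
\[
H_n(\sigma) \;=\; \bigl(1 + (-1)^{m-1}\bigr)\, H_{n-m}(\sigma'),
\]
which is $0$ when $m$ is even and $2\,H_{n-m}(\sigma')$ when $m$ is odd. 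The inductive hypothesis then closes the argument cleanly: an even cycle of $\sigma$ forces $H_n(\sigma) = 0$, while an odd cycle multiplies $H_{n-m}(\sigma')$ by $2$ and reduces the cycle count by one, producing $2^{\#\mathrm{cyc}(\sigma)-1}$ precisely when every cycle of $\sigma$ is odd. The remaining edge case $m = n$ is handled directly: only strip (c) contributes, so $H_n(\sigma) = \sum_{k=0}^{n-1}(-1)^k$, equal to $1$ for odd $n$ and $0$ for even $n$, matching the desired formula.

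The main obstacle I anticipate is the border-strip classification itself --- one needs to verify carefully that no corner-crossing strip in $(n-k,1^k)$ can leave a valid sub-partition behind (except when the whole hook is removed). Once that purely geometric fact is pinned down, the remaining argument is a clean double-sum rearrangement together with a one-line induction, and I expect no further technical difficulties.
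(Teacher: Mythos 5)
Your proposal is correct and follows essentially the same route as the paper: both arguments peel off one cycle at a time via the Murnaghan--Nakayama rule, classify the border strips of a hook into an arm strip (leg-length $0$) and a leg strip (leg-length $m-1$), and arrive at the recursion $H_n(\sigma) = (1+(-1)^{m-1})H_{n-m}(\sigma')$, with the single-cycle case handled by the full-hook strip. The only differences are cosmetic (induction on $n$ versus on the number of cycles, and your slightly more explicit justification that no proper border strip can cross the corner cell).
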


\begin{proof}[Proof of Theorem~\ref{thm:hooks}]
We proceed by induction on $m := \#\mathrm{cyc}(\sigma)$, the number of cycles.

If $\sigma$ is a single odd cycle, then by Theorem~\ref{thm:mn} we have $H_n(\sigma) = 1$. Similarly, if $\sigma$ is a single even cycle, then $H_n(\sigma) =0$.

\medskip
\noindent Assume $\sigma = \sigma_1\cdots\sigma_m$ has no even cycle, $|\sigma_1| = l$, and $\sigma' := \sigma_2\cdots\sigma_m$. Theorem~\ref{thm:mn} gives
	\[
		H_n(\sigma) = \sum_{k=0}^{n-1} h_{k,n}(\sigma)  = \sum_{k=0}^{n-1} h_{k-l,n-l}(\sigma') + h_{k,n-l}(\sigma')  
	\]
	noting that, by default, if $k < l$, then $h_{k-l,n-l}(\sigma') = 0$, and if $n-k \leq l$, then $h_{k,n-l}(\sigma') = 0$. This gives
	\[
		H_n(\sigma)  = \sum_{k=l}^{n-1} h_{k-l,n-l}(\sigma') + \!\!  \sum_{k=0}^{n-l-1} h_{k,n-l}(\sigma')  
 = 2 \sum_{j=0}^{n-l-1} h_{j,n-l}(\sigma') = 2H_{n-l}(\sigma') = 2^{\#\text{cyc}(\sigma) - 1}
	\]
	where the last equality holds by induction.

\medskip
	
\noindent Assume $\sigma = \sigma_1 \cdots \sigma_m$ has an even cycle $\sigma_1$ without loss of generality. Let $|\sigma_1| = l$ and $\sigma' := \sigma_2,\sigma_3 \cdots, \sigma_m$. By Theorem~\ref{thm:mn} we have
	\[
		H_n(\sigma) = \sum_{k=0}^{n-1}  h_{k,n-l}(\sigma') - h_{k-l,n-l}(\sigma').
	\]
Similarly, this gives $H_n(\sigma) = H_{n-l}(\sigma') - H_{n-l}(\sigma') = 0$, which completes the proof.
\end{proof}
\noindent We are unaware of any previous work that generalizes Equation~(\ref{eq:even}) at the level of characters, so we now prove the result complementary to the theorem above. Here, we let $b_{k,n} := \chi^{(n-k,k)}$ be our shorthand for a two-row character, and let $B_n := \sum_{k=0}^{n/2} (-1)^k b_{k,n}$.
\begin{theorem}\label{thm:2row}  Let $n$ be even. For all $\sigma \in S_n$, we have 
	$$
		B_n (\sigma) = 
		\begin{cases}
		2^{\#\mathrm{cyc}(\sigma)} \quad& \text{ if } \sigma \text{ has no odd cycle};\\
		0 \quad& \text{ otherwise.}
		\end{cases} 
	$$
\end{theorem}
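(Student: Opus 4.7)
The plan is to mirror Regev's proof of Theorem~\ref{thm:hooks} as closely as possible, inducting on $m := \#\mathrm{cyc}(\sigma)$ and using the Murnaghan--Nakayama rule to peel off one cycle of $\sigma$ at each step. For the base case $m=1$, we have $\sigma$ a single $n$-cycle (with $n$ even): the whole shape $(n-k,k)$ must itself be a border strip of size $n$, which forces $k \leq 1$. This gives $b_{0,n}(\sigma) = 1$ (leg $0$), $b_{1,n}(\sigma) = -1$ (leg $1$), and $b_{k,n}(\sigma) = 0$ for $k \geq 2$ (the shape then contains a $2\times 2$ square). Hence $B_n(\sigma) = 1 - (-1) = 2 = 2^1$, as claimed.

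For $m \geq 2$, fix a cycle $\sigma_1$ of length $l$ and write $\sigma = \sigma_1 \sigma'$ with $\sigma' \in S_{n-l}$. I would enumerate the border strips of size $l$ inside $\lambda = (n-k,k)$: one entirely in row 1 (leaving $\nu = (n-k-l,k)$, leg $0$), one entirely in row 2 (leaving $\nu = (n-k,k-l)$, leg $0$), and one ``elbow'' strip through both rows, joining them at column $k$ (leaving $\nu = (k-1, n-k-l+1)$, leg $1$). Substituting the resulting three-term MN identity into $B_n(\sigma) = \sum_{k=0}^{n/2} (-1)^k b_{k,n}(\sigma)$ and re-indexing the second and third contributions via $j = k - l$ and $j = n - k - l + 1$ respectively produces three sums $T_1, T_2, -T_3$, each of the form $\sum (-1)^{\text{sign}} b_{j,n-l}(\sigma')$ running over ranges of $j$ whose union is precisely $\{0, 1, \ldots, \lfloor (n-l)/2 \rfloor\}$.

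In \textbf{Case 1}, where every cycle of $\sigma$ is even, I would choose any $\sigma_1$, making $l$ and $n-l$ even. The factor $(-1)^l = +1$ keeps $T_2$'s sign positive, and combining $T_2$ with $-T_3$ reassembles the complete sum $B_{n-l}(\sigma')$; meanwhile $T_1 = B_{n-l}(\sigma')$ independently. Thus $B_n(\sigma) = 2 B_{n-l}(\sigma') = 2 \cdot 2^{m-1} = 2^m$ by the inductive hypothesis (which applies since $\sigma'$ still has only even cycles on an even number of symbols). In \textbf{Case 2}, where $\sigma$ has at least one odd cycle, I would choose $\sigma_1$ to be such an odd cycle, so $l$ and $n-l$ are odd. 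Now $(-1)^l = -1$ flips the sign on $T_2$, and a short calculation shows $T_2 - T_3 = -T_1$, giving $B_n(\sigma) = T_1 + (T_2 - T_3) = T_1 - T_1 = 0$. Notably, Case 2 needs no induction hypothesis at odd $n - l$ (none is available, and none is claimed): the cancellation happens purely at the Murnaghan--Nakayama level.

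The main obstacle I anticipate is correctly identifying the elbow border strip that crosses both rows (the nontrivial case of the MN expansion for a two-row shape) and then tracking how the sign $(-1)^k$ transforms under the substitution $k \mapsto j = n - k - l + 1$: this transform brings in a factor $(-1)^{n - l + 1}$, whose value depends on the parity of $n - l$, and this single parity is precisely what distinguishes the vanishing in Case 2 from the doubling in Case 1. Once the ranges of $j$ and the signs are carefully sorted out, the combinatorial cancellations fall out cleanly, and the proof reads as a direct two-row analogue of the hook-character argument already given for Theorem~\ref{thm:hooks}.
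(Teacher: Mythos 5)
Your proposal is correct and follows essentially the same route as the paper's own proof: induction on the number of cycles, a base case for a single even cycle, and a Murnaghan--Nakayama expansion identifying the three border strips of a two-row shape (top row, bottom row, and the leg-length-one elbow), with the parity of the removed cycle length controlling whether the bottom-row and elbow contributions reassemble a second copy of $B_{n-l}(\sigma')$ or cancel the top-row contribution. The only cosmetic differences are that the paper's proof of Theorem~\ref{thm:hooks} being mirrored is the author's elementary MN argument rather than Regev's, and the paper handles the odd-cycle case via an auxiliary truncation $B'_{n-l}$ since $n-l$ is odd there; your observation that no inductive hypothesis is needed in that case matches the paper.
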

\begin{proof}
We proceed by induction on $m := \#\mathrm{cyc}(\sigma)$, the number of cycles. Let $\sigma$ be a single even cycle. By Theorem~\ref{thm:mn}, we have
$$B_n(\sigma) = b_{0,n}(\sigma) - b_{1,n}(\sigma) + \sum_{k=2}^{n/2 - 2} b_{k,n}(\sigma) = 1 - (-1) + 0 = 2.$$
	
Assume that $\sigma = \sigma_1\sigma_2,\cdots,\sigma_m$ has no odd cycle and $m \geq 2$. Let $|\sigma_1| = l \leq n/2$ without loss of generality, and let $\sigma' := \sigma_2  \cdots, \sigma_m$. 

For each summand $(-1)^kb_{k,n}$ of $B_n$ such that $l \leq n-2k$, remove $l$ cells from the first row to obtain $b_{k,n-l}$. This gives a rim hook of leg length 0, together giving a total contribution of $B_{n-l}(\sigma')$. For each summand $(-1)^kb_{k,n}$ of $B_n$ such that $l \leq k$, remove $l$ cells from the second row to obtain $b_{k-l,n-l}$. This gives a rim hook of leg length 0, which in turn gives a contribution of 
$$\sum_{k=0}^{n/2 - l} (-1)^k b_{k,n-l}(\sigma').$$ 
The only remaining rim hooks to consider are those of leg length 1. The shapes that admit such rim hooks are the $b_{k,n}$'s such that $n-2k+2 \leq l$. Since the leg length is 1, after removing the rim hook, each such summand $(-1)^kb_{k,n}$ of $B_n$ gives $(-1)^{k+1} b_{k',n-l}$ for some $k' \not \equiv k \mod 2$. In particular, we obtain a total contribution of 
$$\sum_{k=n/2 - l + 1}^{(n-l)/2} (-1)^{k} b_{k,n-l}(\sigma').$$ 
Summing up all three cases gives $2B_{n-l}(\sigma') = 2^{m}$ by induction. 

\medskip

Assume $\sigma = \sigma_1\sigma_2,\cdots,\sigma_m$ has an odd cycle $\sigma_1$. Since $n$ is even, we have $l := |\sigma_1| \leq n/2$. Let $\sigma' := \sigma_2  \cdots, \sigma_m$. A similar argument shows that $B_n(\sigma) = 0$. For convenience, define $B'_{n}(\sigma) := B_{n}(\sigma) - (-1)^{n/2}b_{n/2,n}$.

Similar to the argument above, one has that the rim hooks of leg length 0 on the top row together contribute $B'_{n-l}(\sigma')$. But since $l$ is odd, the rim hooks of leg length 0 on the bottom row now collectively contribute 
$$-\sum_{k=0}^{n/2 - l} (-1)^k b_{k,n-l}(\sigma').$$ 
Similarly, the remaining rim hooks of leg length 1 now contribute 
$$-\sum_{k=n/2 - l + 1}^{(n-l)/2 - 1} (-1)^{k} b_{k,n-l}(\sigma').$$ 
Altogether, this gives $B'_{n-l}(\sigma') - B'_{n-l}(\sigma') = 0$, as desired.
\end{proof}

\noindent Note that Equation~(\ref{eq:odd}) and Equation~(\ref{eq:even}) are recovered by the orthogonality relations, i.e., $\langle 1_{S_n}, H_n \rangle_{S_n} = n! = \langle 1_{S_n}, B_n \rangle_{S_n}$ where $1_{S_n}$ denotes the trivial representation of $S_n$.
In Section~\ref{sec:proofs} we use these characters to prove Theorem~\ref{thm:main} along with another new albeit less surprising combinatorial result concerning odd-cycle-intersecting permutations.

\section{Proof of Theorem~\ref{thm:main}}\label{sec:proofs}

We say that a family of permutations of $S_n$ is \emph{odd-cycle-intersecting} if the difference of any two elements in the family has an odd cycle. If $n$ is odd, then $S_n$ itself is odd-cycle-intersecting, so we assume $n$ is even. 
\begin{theorem}
Let $n$ be even. If $\mathcal{F} \subseteq S_n$ is odd-cycle-intersecting, then $|\mathcal{F}| \leq (n-1)!$. Equality holds if and only if $\mathcal{F}$ is a double translate of the Young subgroup $S_{n-1} \times S_1$.
\end{theorem}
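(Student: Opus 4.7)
My plan is to apply the weighted Delsarte--Hoffman bound (Theorem~\ref{thm:ratioInd}) to the Cayley graph $Y := \mathrm{Cay}(S_n, T)$, where $T \subseteq S_n$ is the set of non-identity permutations all of whose cycles have even length; the independent sets of $Y$ are exactly the odd-cycle-intersecting families of $S_n$. The key move is to choose as weighted adjacency matrix the normal Cayley matrix $W_{g,h} := B_n(gh^{-1})$ built from the virtual character $B_n$ of Theorem~\ref{thm:2row}. By that theorem, $B_n$ vanishes at the identity and at every permutation having an odd cycle, so $W$ is supported on the edges of $Y$; and since $B_n$ is a real-valued class function, $W$ is symmetric with constant row sum $\sum_{x \in S_n} B_n(x) = n!$ (by \eqref{eq:even}, or equivalently because $\chi^{(n)}$ occurs in $B_n$ with coefficient $+1$).

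Since $W$ is normal, its eigenvalue on the $\chi^\lambda$-isotype is $\eta^\lambda = (\chi^\lambda(1))^{-1}\sum_{g} B_n(g)\chi^\lambda(g)$, which by character orthogonality and the definition of $B_n$ equals $(-1)^k n!/\dim\chi^{(n-k,k)}$ when $\lambda = (n-k,k)$ and $0$ for every non-two-row $\lambda$. Using $\dim\chi^{(n-k,k)} = \binom{n}{k} - \binom{n}{k-1}$, a quick check shows that among odd $k \le n/2$ this dimension is minimized at $k = 1$, strictly so for $n \ge 8$ and with a single tie at $k = 3$ when $n = 6$. Hence $\eta_{\max} = n!$ and $\eta_{\min} = -n!/(n-1)$, and Theorem~\ref{thm:ratioInd} gives
\[
|\mathcal{F}| \;\le\; n!\cdot\frac{n!/(n-1)}{n! + n!/(n-1)} \;=\; (n-1)!.
\]

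For the extremal case, each point-stabilizer coset $T_{i,j} := \{\sigma \in S_n : \sigma(i) = j\}$ is odd-cycle-intersecting (any two of its elements differ by a permutation fixing $j$, hence have a cycle of length $1$) and has size $(n-1)!$; and these are precisely the double-translates of the Young subgroup $S_{n-1}\times S_1$. For the converse, the equality clause of Theorem~\ref{thm:ratioInd} forces $\mathbf{1}_{\mathcal{F}}$ to lie in the direct sum of the trivial and $(n-1,1)$-isotypic components of $\mathbb{R}[S_n]$; I would then invoke the standard characterization, implicit in the work of Ellis--Friedgut--Pilpel, that the only $0/1$-valued functions of weight $(n-1)!$ in that span are the indicators $\mathbf{1}_{T_{i,j}}$.

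The main obstacle is the uniqueness step at $n = 6$, where the coincidence $\dim\chi^{(3,3)} = \dim\chi^{(5,1)} = 5$ puts the $(3,3)$-isotype into the minimum eigenspace and enlarges the subspace in which $\mathbf{1}_{\mathcal{F}}$ must sit. I would handle this either by perturbing $W$ with a correction class-function that still vanishes on permutations with odd cycles but strictly separates the two eigenvalues, or by a direct combinatorial check for this single value of $n$; for all other even $n$, the Ellis--Friedgut--Pilpel characterization applies without modification.
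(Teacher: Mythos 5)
Your proposal is essentially identical to the paper's proof: the paper likewise weights $\mathrm{Cay}(S_n,\mathcal{E}_n)$ by the matrix $\bigl(B_n(\sigma\pi^{-1})\bigr)_{\pi,\sigma}$, reads off the eigenvalues $(-1)^k\,n!/\dim\chi^{(n-k,k)}$ on the two-row isotypes (zero elsewhere), applies Delsarte--Hoffman to get $(n-1)!$, and invokes the Ellis--Friedgut--Pilpel characterization of Boolean vectors in the trivial $\oplus\,(n-1,1)$ span for uniqueness. You are in fact more careful than the paper: the coincidence $\dim\chi^{(3,3)}=\dim\chi^{(5,1)}=5$ at $n=6$, which puts the $(3,3)$-isotype into the minimal eigenspace and blocks the direct appeal to Ellis--Friedgut--Pilpel for that one value of $n$, is glossed over by the paper's claim that $H_{(n)}$ and $H_{(n-1,1)}$ are the largest and second largest hook products in the sum, so your flagged repair (a perturbed weighting or a direct check at $n=6$) addresses a genuine gap in the published argument rather than one in your own.
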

\begin{proof}
 Let $H_\lambda$ be the hook product of $\lambda \vdash n$, and let $E_\lambda := (H_\lambda^{-1} \chi^\lambda(\sigma \pi^{-1})_{\pi,\sigma})$ denote the orthogonal projection onto the $\lambda$-isotypic component of the space of real-valued functions on $S_n$. Let $\mathcal{E}_n \subseteq S_n$ be the set of permutations that have no odd cycle. Any odd-cycle-intersecting family is an independent set of $\mathrm{Cay}(S_n, \mathcal{E}_n)$.
By Theorem~\ref{thm:2row}, the $n! \times n!$ matrix $\mathrm{EC}_n := \sum_{k=0}^{n/2} (-1)^k H_{b_{k,n}}E_{b_{k,n}}$ is a weighted adjacency matrix for $\mathrm{Cay}(S_n, \mathcal{E}_n)$ with constant row sum. It is easily verified that $H_{(n)}$ and $H_{(n-1,1)}$ are the largest and second largest hook products arising in the summation, thus the greatest eigenvalue is $n!$ and the least eigenvalue is $-n!/(n-1)$. Delsarte--Hoffman gives 
\[
	|S| \leq n! \frac{n!/(n-1)}{n! + n!/(n-1)} = (n-1)!.
\]
The Young subgroups $S_{n-1} \times S_1$ all have an odd (singleton) cycle, so the bound is met with equality. The later implies that the characteristic vector of any maximum odd-cycle-intersecting family lies in the direct sum of the $(n)$ and $(n-1,1)$-isotypic components. Ellis, Friedgut, and Pilpel~\cite{EllisFP11} showed that the only binary vectors with $(n-1)!$-many 1's that lie in this subspace are double translates of $S_{n-1} \times S_1$, completing the proof.
\end{proof}
\noindent Here, the combinatorial result is a bit unsurprising, as $\mathrm{EC}_n$ is also a weighted subgraph of \emph{the derangement graph}, i.e., the Cayley graph on $S_n$ generated by its derangements. It is well known that its maximum independent sets are the double translates of $S_{n-1} \times S_1$ (see~\cite{GodsilMeagher}, for example), and it has already been observed that its independence number does not rise when passing to many natural conjugacy-closed subgraphs (see~\cite{KuLW16,FilmusL24}). Despite appearances, the foregoing is not an analogue of the \emph{oddtown problem} for sets~\cite{BF92}.

\medskip

Before we begin the proof of Theorem~\ref{thm:main}, we recall some basic facts about Sylow $2$-subgroups of the symmetric group. We refer the reader to~\cite{Renteln25,WildonBlog2} for more details.
By definition, the order of each element in a Sylow $p$-subgroup $P_n \leq S_n$ is a power of $p$. For $p=2$, it is well known that $P_n$ can be identified with the automorphism group of a forest $F$ of $m$ complete binary trees on $n$ leaves, i.e., 
$$P_n \cong \mathrm{Aut}(F) \cong \mathrm{Aut}(T_1) \times \cdots \times \mathrm{Aut}(T_m) \subseteq S_n.$$ 
In particular, write $n = 2^{h_1} + \cdots + 2^{h_m}$ where $h_1 > \cdots  > h_m \geq 0$. Each $h_i$ denotes the height of $T_i$, and $\mathrm{Aut}(T_i)$ is the $h_i$-fold iterated wreath product of the form $C_2 \wr \cdots \wr C_2$ where $C_2$ is the cyclic group of order 2. Legendre's formula gives the exponent of the largest power of a prime $p$ that divides $n!$, which for $p=2$ gives us 
$$|P_n| = 2^{\sum_{i \ge 1} \lfloor n/2^i \rfloor}.$$ 
The exponent is maximized when $n$ is a power of 2, say $n=2^\ell$, in which case we have 
$$|P_n| = 2^{1 + 2 + \cdots + 2^{\ell-1}} =  2^{2^\ell - 1} =  2^{n - 1}.$$
We collect the foregoing observations into the following proposition.
\begin{proposition}\label{prop:sylow}
Any Sylow 2-subgroup $P_n \leq S_n$ is a 2-singular family of permutations such that $|P_n| \leq 2^{n-1}$, and equality holds if and only if $n$ is a power of 2. 
\end{proposition}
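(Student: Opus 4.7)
The proposition has two independent assertions---that any Sylow 2-subgroup is a 2-singular family, and that its cardinality satisfies the stated bound with the stated equality condition---and the plan is to dispatch each one using only the structural facts about $P_n$ already collected in the excerpt.

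For the 2-singular claim, the key point is that $P_n \leq S_n$ is a \emph{subgroup}, so for any two distinct $\sigma, \pi \in P_n$ the quotient $\sigma\pi^{-1}$ lies in $P_n \setminus \{\mathrm{id}\}$. By the defining property of a Sylow 2-subgroup, every element of $P_n$ has order a power of 2, so $|\sigma\pi^{-1}| = 2^k$ for some $k \geq 1$. Since the order of a permutation equals the least common multiple of its cycle lengths, at least one cycle of $\sigma\pi^{-1}$ must have even length, which is precisely the 2-singularity condition $|\sigma\pi^{-1}| \equiv 0 \bmod 2$.

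For the size bound I would proceed via Legendre's formula already quoted, $|P_n| = 2^{\sum_{i \geq 1} \lfloor n/2^i \rfloor}$, together with the standard identity that evaluates the exponent as $n - s_2(n)$, where $s_2(n)$ is the number of 1's in the binary expansion of $n$. Equivalently, and perhaps more transparently given the setup in the excerpt, one can use the iterated wreath product description directly: writing $n = 2^{h_1} + \cdots + 2^{h_m}$ with $h_1 > \cdots > h_m \geq 0$ and using $|\mathrm{Aut}(T_i)| = 2^{2^{h_i} - 1}$, one computes
\[
|P_n| = \prod_{i=1}^m 2^{2^{h_i}-1} = 2^{n - m},
\]
where $m = s_2(n)$ is the number of dyadic summands. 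Since $m \geq 1$, we obtain $|P_n| \leq 2^{n-1}$, and equality holds precisely when $m=1$, i.e.\ when $n$ is a power of 2.

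There is no real obstacle in this argument: both halves are essentially bookkeeping on top of the structure theorem and Legendre's formula, which the paragraph preceding the proposition has already invoked. The role of the proposition is simply to record the concrete lower bound $\alpha(\Gamma_{2'}(S_{2^\ell})) \geq 2^{n-1}$ that Theorem~\ref{thm:main} will match from above, and to make explicit the equality case $n = 2^\ell$ that will be identified with the extremal families.
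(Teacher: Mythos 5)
Your proposal is correct and follows essentially the same route as the paper, which simply collects the preceding observations: elements of a Sylow 2-subgroup have 2-power order (so nonidentity differences $\sigma\pi^{-1}$ are 2-singular), and Legendre's formula gives $|P_n| = 2^{n - s_2(n)} \leq 2^{n-1}$ with equality exactly when $n$ is a power of 2. Your explicit computation via the wreath-product orders $2^{2^{h_i}-1}$ is a slightly more detailed verification of the same exponent count, but there is no substantive difference.
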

\noindent We are now in a position to give a proof of Theorem~\ref{thm:main}.
\begin{proof}[Proof of Theorem~\ref{thm:main}]
\noindent As before, let $H_\lambda$ be the hook product of $\lambda \vdash n$, and let $E_\lambda = (H_\lambda^{-1} \chi^\lambda(\sigma \pi^{-1}))_{\pi,\sigma}$ be the orthogonal projection onto the $\lambda$-isotypic component of the space of real-valued functions on $S_n$. Define the $n! \times n!$ matrix 
\[
	\mathrm{OC}_n := \sum_{k=0}^{n-1} H_{h_{k,n}} E_{h_{k,n}} = (\chi^\lambda(\sigma \pi^{-1})_{\pi,\sigma}).
\]
Theorem~\ref{thm:main} implies that $\mathrm{OC}_n' := \mathrm{OC}_n - 2^{n-1}I$ is a weighted adjacency matrix for the 2-regular graph of $S_n$ with constant row and column sum. In particular, we have
\[
	\mathrm{OC}_n' = \sum_{k=0}^{n-1} (n \cdot (n-k-1)! \cdot k! - 2^{n-1})~E_{h_{k,n}} + \sum_{\substack{\lambda \vdash n \\ \lambda \text{ not a hook}}} -2^{n-1}E_\lambda,
\]
thus $-2^{n-1}$ is the least eigenvalue and $n! - 2^{n-1}$ is the largest eigenvalue of $\mathrm{OC}_n'$. Delsarte--Hoffman gives
\[
	|S| \leq n! \cdot \frac{2^{n-1}}{n! - 2^{n-1} + 2^{n-1}} = 2^{n-1}.
\]
Now let $n = 2^\ell$ and let $S = P_n \leq S_n$ be a Sylow 2-subgroup. Proposition~\ref{prop:sylow} implies that the Sylow 2-subgroups and their double-translates, i.e., $\sigma P_n \pi$ for some $\sigma, \pi \in S_{n}$, are largest independent sets of the 2-regular graph of $S_{n}$. This completes the proof.
\end{proof}
We now collect a few corollaries of Theorem~\ref{thm:main}.
\noindent Letting $J$ be the $n! \times n!$ all-ones matrix, it is easy to see that $\mathrm{OC}_n - J$ is, remarkably,  an \emph{integral} feasible solution to the SDP~(\ref{eq:sdp}) that minimizes the objective function when $n$ is a power of 2. Since $\Gamma_{2'}(S_{n})$ is a normal Cayley graph, Theorem~\ref{thm:main} and a well-known result of Schrijver implies that $\vartheta(\overline{\Gamma_{2'}(S_{n})}) = n!/2^{n-1}$ when $n$ is a power of 2. Another corollary of Theorem~\ref{thm:main} a simple proof of a result of Giannelli on the induced representation of the trivial representation of a Sylow 2-subgroup (e.g., see \cite[Prop 3.5]{StaceyO23}).
\begin{corollary}[Giannelli~\cite{Giannelli17}]
Let $n$ be a power of 2 and let $P_n \leq S_n$ be a Sylow $2$-subgroup of $S_n$. Then the non-trivial hook-shaped irreducible representations of $S_n$ are not irreducible constituents of the permutation representation of $S_n$ acting on $S_n/P_n$.
\end{corollary}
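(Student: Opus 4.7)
The plan is to apply Frobenius reciprocity and then exploit the character identity of Theorem~\ref{thm:hooks} by summing it over the Sylow $2$-subgroup $P_n$. By Frobenius reciprocity, the multiplicity of an irreducible character $\chi^\lambda$ as a constituent of the permutation representation $\mathrm{Ind}_{P_n}^{S_n}\mathbf{1}$ equals $\frac{1}{|P_n|}\sum_{g \in P_n}\chi^\lambda(g)$, and this is a \emph{non-negative} integer because it counts a multiplicity. So to show that $\chi^{(n-k,1^k)}$ does not appear for $1 \le k \le n-1$, it suffices to prove $\sum_{g \in P_n}\chi^{(n-k,1^k)}(g) = 0$ for each such $k$.

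The idea is to attack all these hook characters simultaneously by summing $H_n = \sum_{k=0}^{n-1}\chi^{(n-k,1^k)}$ over $g \in P_n$. Every element of $P_n$ has order a power of $2$, and consequently each of its cycles has length that is a power of $2$; in particular, for any $g \in P_n \setminus \{\mathrm{id}\}$ at least one cycle has length $\geq 2$, hence even. Theorem~\ref{thm:hooks} then gives $H_n(g)=0$ for every non-identity $g \in P_n$, while $H_n(\mathrm{id}) = 2^{n-1}$. Therefore
\[
	\sum_{g \in P_n} H_n(g) \;=\; 2^{n-1}.
\]

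Since $\chi^{(n)}$ is the trivial character, $\sum_{g \in P_n}\chi^{(n)}(g) = |P_n| = 2^{n-1}$ by Proposition~\ref{prop:sylow}. Subtracting this contribution from the identity above yields
\[
	\sum_{k=1}^{n-1}\sum_{g \in P_n}\chi^{(n-k,1^k)}(g) \;=\; 0.
\]
Each inner sum is a non-negative integer multiple of $|P_n|$ by Frobenius reciprocity, so a sum of such non-negative quantities being zero forces every one of them to be zero. This gives the desired vanishing.

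In truth, there isn't a real obstacle: the conceptual step is simply noticing that $P_n$ is supported on $\{\mathrm{id}\}$ from the point of view of $H_n$, and that non-negativity of multiplicities upgrades an identity about a sum of characters into an identity for each hook separately. No appeal to Theorem~\ref{thm:main} itself is required — only Theorem~\ref{thm:hooks} and the description of $P_n$ via Proposition~\ref{prop:sylow}.
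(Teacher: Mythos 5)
Your proof is correct, and it takes a genuinely different route from the paper's. The paper deduces the corollary from the equality case of the Delsarte--Hoffman bound (Theorem~\ref{thm:ratioInd}) as applied in the proof of Theorem~\ref{thm:main}: since $|P_n| = 2^{n-1}$ attains the bound for the weighted adjacency matrix $\mathrm{OC}_n'$, the vector $1_{P_n}$ must lie in the direct sum of the $\eta_{\max}$- and $\eta_{\min}$-eigenspaces, and the intermediate hook components contribute to neither. You bypass Theorem~\ref{thm:main} and the spectral machinery entirely: Frobenius reciprocity, the vanishing of $H_n$ on $2$-singular elements (Theorem~\ref{thm:hooks}), and non-negativity of multiplicities give the result directly, and your computation $\sum_{g \in P_n} H_n(g) = 2^{n-1} = |P_n|$ is exactly the verification that $H_n$ restricts to the regular character of $P_n$, i.e., that $H_n$ is $\mathrm{Syl}_2$-regular in the sense of Section~\ref{sec:stlike}. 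Your argument is more elementary and self-contained, and it also cleanly handles the sign representation $(1^n)$, which is a non-trivial hook: in the paper's argument the sign component actually sits inside the $\eta_{\max}$-eigenspace (its hook product is also $n!$), so the Delsarte--Hoffman equality statement by itself does not exclude it and one needs the further observation that $P_n$ contains odd permutations. What the paper's spectral route buys is a uniform template (Theorem~\ref{thm:main2}) applicable to any Steinberg-like character of any group; for this particular corollary your direct character-theoretic argument is arguably the cleaner one.
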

\begin{proof}
In the proof of Theorem~\ref{thm:main}, equality is met in the Delsarte--Hoffman bound; therefore, the characteristic vector of any Sylow 2-subgroup $P_n$ is orthogonal to the direct sum of the irreducibles indexed by non-trivial hooks, i.e., no non-trivial hook appears in the induced representation $1 \uparrow_{P_n}^{S_n}$.
\end{proof}
\noindent

\section{Steinberg-like Characters}\label{sec:stlike}

The significance of the sum of hooks character has not gone unnoticed by group theorists, as it shares some remarkable similarities with a distinguished irreducible character of $\mathrm{GL}_{n,q} := \mathrm{GL}_n(\mathbb{F}_q)$ known as \emph{the Steinberg character}. For any finite field $\mathbb{F}_q$ of characteristic $p$, it is well-known that the Steinberg character vanishes on the $p$-singular elements of $\mathrm{GL}_{n,q}$ and its degree is the size of a largest Sylow $p$-subgroup of $\mathrm{GL}_{n,q}$ (see~\cite{Humphreys87}, for example). The sum of hooks character indeed mimics properties of the Steinberg character in characteristic 2, which raises the question of whether there are other characters that mirror the Steinberg character for various primes. In pursuit of this, Malle and Zalesski~\cite{MalleZ20} proposed the following.

\begin{definition}[$\mathrm{Syl}_p$-regular, $p$-vanishing, Steinberg-like~\cite{MalleZ20}]
Let $G$ be a finite group, let $p$ be a prime, and let $S \leq G$ be its Sylow $p$-subgroup. A character $\chi$ is \emph{$\mathrm{Syl}_p$-regular} if the restriction of $\chi$ to its Sylow $p$-subgroup $S$ is the character of the regular representation of $S$.  If, in addition, $\chi$ is \emph{p-vanishing}, i.e., $\chi(g) = 0$ for all $p$-singular elements $g \in G$, then we say that $\chi$ is \emph{Steinberg-like}.
\end{definition}
\noindent As noted in~\cite{MalleZ20}, the sum of hooks character $H_n$ is Steinberg-like with respect to $p=2$. 
\noindent It is not hard to see that Theorem~\ref{thm:main} generalizes to any Steinberg-like character of a group.
\begin{theorem}\label{thm:main2}
If $\chi$ is Steinberg-like character of a group $G$ with respect to some prime $p$ and $\langle \chi, 1 \rangle = 1$, then its Sylow $p$-subgroups are maximum $p$-singular families of $G$.
\end{theorem}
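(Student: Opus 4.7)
The plan is to abstract the SDP feasibility argument implicit in the proof of Theorem~\ref{thm:main}, replacing the sum-of-hooks character $H_n$ with an arbitrary Steinberg-like $\chi$. Two properties of $H_n$ drove that proof: its vanishing on $2$-singular permutations and its identity value $|P_n|$. These are precisely the Steinberg-like axioms, so the construction should transcribe once the analogue of the trivial-hook contribution is located, a role supplied by the hypothesis $\langle \chi, 1 \rangle = 1$.

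First I would form the character matrix $M := (\chi(gh^{-1}))_{g,h \in G}$ and let $J$ denote the $|G| \times |G|$ all-ones matrix. By $p$-vanishing, $M(g, h) = 0$ whenever $gh^{-1}$ is $p$-singular; by $\mathrm{Syl}_p$-regularity, $M(g, g) = \chi(1) = |S|$. Decomposing into isotypic projections gives $M = \sum_\lambda \frac{|G|\, m_\lambda}{d_\lambda} E_\lambda$, with $m_\lambda := \langle \chi, \chi^\lambda \rangle_G$ and $d_\lambda := \chi^\lambda(1)$; since $J = |G| E_1$ and $m_1 = 1 = d_1$, the trivial-isotypic coefficient cancels exactly, yielding
\[
A := M - J = \sum_{\lambda \neq 1} \frac{|G|\, m_\lambda}{d_\lambda} E_\lambda \succeq 0.
\]
A direct check confirms $A(g, g) = \chi(1) - 1 = |S| - 1$ and $A(g, h) = -1$ for every non-adjacent pair $g \neq h$ in $\Gamma_{p'}(G)$. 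Hence $A$ is feasible for SDP~(\ref{eq:sdp}) with objective value $\theta = |S|$, giving $\alpha(\Gamma_{p'}(G)) \leq \vartheta(\Gamma_{p'}(G)) \leq |S|$. Sharpness is immediate: any Sylow $p$-subgroup consists of elements of $p$-power order and therefore forms a $p$-singular family of size $|S|$.

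The main conceptual subtlety is that one cannot in general carry out the calculation via Delsarte--Hoffman as in the proof of Theorem~\ref{thm:main}: the multiplicities $m_\lambda$ may exceed $d_\lambda$ for some non-trivial $\lambda$, so the all-ones vector need not realize the largest eigenvalue of the weighted adjacency matrix $M - |S|I$. Passing to the Lov\'asz $\vartheta$ formulation circumvents this by working with the PSD matrix $M - J$ directly, which is the only real departure from the proof of the main theorem.
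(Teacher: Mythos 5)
Your proof is correct. The paper's own argument uses essentially the same certificate but packages it through the Delsarte--Hoffman bound: it forms $\tilde{A}_\chi := A_\chi - |P|I$ with $(A_\chi)_{\sigma\pi} = \chi(\sigma\pi^{-1})$, declares $|G|-|P|$ and $-|P|$ to be the greatest and least eigenvalues, and reads off the bound $|P|$ from the ratio formula. Your route via direct feasibility of $M-J$ for SDP~(\ref{eq:sdp}) is the cleaner of the two, and your closing remark identifies a genuine soft spot in the Delsarte--Hoffman packaging: the claim that the greatest eigenvalue of $\tilde{A}_\chi$ is $|G|-|P|$ amounts to asserting $m_\lambda \le d_\lambda$ for every nontrivial constituent, which does not obviously follow from the Steinberg-like axioms alone. (The paper's bound survives regardless, since the ratio-bound argument really only needs the constant row sum $|G|-|P|$ together with the one-sided estimate $\eta_{\min} \ge -|P|$ coming from $A_\chi \succeq 0$; but your $\vartheta$-based formulation needs neither assertion, which is what it buys you.) Two minor points: the paper reserves $S$ for the independent set in this context and writes $P$ for the Sylow subgroup, so your use of $|S|$ for $\chi(1)$ invites confusion; and in the sharpness step it is worth saying explicitly that distinct $g,h \in P$ give $gh^{-1} \in P \setminus \{\mathrm{id}\}$, an element of order a positive power of $p$ and hence $p$-singular, so that $P$ is indeed an independent set of $\Gamma_{p'}(G)$ of size $|P|$.
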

\begin{proof}
Let $P \leq G$ be a Sylow $p$-subgroup of $G$. Consider the graph $\Gamma_{p'}(G)$ and consider the matrix $A_\chi$ induced by $\chi$, i.e., $(A_\chi)_{\sigma \pi} := \chi(\sigma \pi^{-1})$. Since $\chi$ is Steinberg-like, it follows that $\tilde{A}_\chi := A_\chi - |P| I$ is a weighted adjacency matrix for $\Gamma$.  

Since $\langle \chi, 1 \rangle = 1$, the greatest eigenvalue of $\tilde{A}_\chi $ is $|G|-|P|$. Since $\chi(1) \geq \chi(g)$, the least eigenvalue of $\tilde{A}_\chi$ is $-|P|$. The Delsarte--Hoffman bound gives
\[
	|S| \leq |G| \frac{|P|}{|G|-|P| + |P| } = |P|,
\]
hence the size of a $p$-singular family $S \subseteq G$ is no greater than $|P|$. Sylow $p$-subgroups are $p$-singular, which completes the proof.
\end{proof}
\noindent This gives a combinatorial way of ruling out Steinberg-like characters with respect to $p$ with the additional condition that $\langle \chi, 1 \rangle = 1$. In particular, if there exists a $p$-singular family that is larger than a Sylow $p$-subgroup, then such characters cannot exist. 

Building off a series of papers, Malle and Zalesski~\cite{MalleZ20} essentially classified the Steinberg-like characters of finite simple groups. They do not exist for many natural classes of groups for primes $p \neq 2$, and in fact, the existence of the sum of hooks character for $n = 2^\ell$ (restricted to $A_n$) is what prevented a full classification~\cite{MalleZ20}. Malle and Zalesski leave it as an open question whether Steinberg-like characters of $S_n$ exist for $p=2$ and $n \neq 2^\ell$. We note that the SDP~(\ref{eq:sdp}) becomes a \emph{linear program} if further one constrains the matrices to belong to the Bose--Mesner algebra of the group conjugacy class association scheme of any finite group $G$, so in principle, determining if there exists a Steinberg-like character of $G=S_n$ for $p=2$ such that $\langle \chi, 1\rangle = 1$ could be resolved for modest $n$ by solving a relatively small integer linear program. For our purposes, we note that the integrality constraint is not necessary. Indeed, the existence of a conic combination $f$ of irreducible characters vanishing on $p$-singular elements and satisfying both $\langle f , 1 \rangle = 1$ and $f(1) = |P|$ implies that the Sylow $p$-subgroups are maximum $p$-singular families of $G$. This raises the question of whether such relaxations of Steinberg-like characters exist for simple groups, which we leave as an open question.


When a finite group $G$ admits an (irreducible) Steinberg character $\mathrm{St}$, it is natural to ask what this implies about the size of a maximum $p$-singular family. It is not difficult to show that $\mathrm{St}$ gives a non-trivial bound on the maximum size of a $p$-singular family of $\mathrm{GL}_{n,q}$ when $q$ is a power of $p$. For this, we require another bound, incomparable to Lov\'asz-$\vartheta$ and Delsarte--Hoffman, based on matrix rank.
\begin{theorem}[Haemers~{\cite{Haemers79}}]
For any graph $X = (V,E)$ and a field $K$, a $|V| \times |V|$ matrix $M$ with entries in $K$ is said to \emph{fit} $X$ if $m_{i,i} = 1$ for all $i \in V$ and $m_{i,j} = m_{j,i} = 0$ whenever $ij \not \in E$. Let $M_K(X)$ to be the set of all matrices over $K$ that fit $X$. Then $\alpha(X) \leq \mathcal{H}_K(X) := \min\{\mathrm{rank}_K(M) : M \in M_K(X)\}$. 
\end{theorem}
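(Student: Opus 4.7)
The plan is to show directly that every matrix $M$ that fits $X$ has rank at least $\alpha(X)$, from which the claimed inequality $\alpha(X) \leq \mathcal{H}_K(X)$ follows by taking the minimum over $M_K(X)$. The key idea is to locate inside any such $M$ a principal submatrix that the fitting conditions force to be the identity, of the size exactly $\alpha(X)$.

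First, I would fix a maximum independent set $S \subseteq V$ of $X$, so that $|S| = \alpha(X)$, together with an arbitrary $M \in M_K(X)$. I would then examine the principal submatrix $M[S,S]$ whose rows and columns are indexed by $S$. The diagonal entries are all $1$ by the first fitting condition. For any two distinct $i,j \in S$, independence of $S$ in $X$ gives $ij \notin E$, whence the second fitting condition forces $m_{i,j} = m_{j,i} = 0$. Therefore $M[S,S] = I_{|S|}$, the $|S| \times |S|$ identity matrix over $K$, whose rank is $|S|$.

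The final step is to invoke the standard fact that the rank of a matrix is at least the rank of any of its submatrices (this follows, e.g., by observing that a linearly independent set of rows of $M[S,S]$ remains linearly independent when extended to rows of $M$ in $K^{|V|}$). Applying this with $M[S,S] = I_{|S|}$ yields $\mathrm{rank}_K(M) \geq |S| = \alpha(X)$, and since the choice of $M \in M_K(X)$ was arbitrary, taking the minimum on the right gives $\alpha(X) \leq \mathcal{H}_K(X)$, as desired.

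There is really no obstacle here: the result is essentially a one-step observation, and the only care required is verifying that the two fitting conditions are precisely calibrated so that the principal submatrix indexed by an independent set is forced to be the identity. The statement's power for applications (as will be needed in the remainder of the paper) does not come from the depth of its proof but rather from the flexibility of choosing both the fitting matrix $M$ and the field $K$ to minimize the rank bound.
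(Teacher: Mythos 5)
Your proof is correct and complete: the principal submatrix of $M$ indexed by a maximum independent set is forced to be the identity, so $\mathrm{rank}_K(M) \geq \alpha(X)$ for every fitting matrix. The paper cites this result to Haemers without proof, and your argument is the standard one, so there is nothing to compare against.
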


\begin{theorem}
Let $q=p^a$ for some prime $p$ and $a \in \mathbb{N}$. Then for all $n \geq 2$, we have 
$$\alpha (\Gamma_{p'}(\mathrm{GL}_{n,q})) \leq q^{n^2-n} = o(|\mathrm{GL}_{n,q}|).$$
\end{theorem}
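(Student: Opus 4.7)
The plan is to invoke the Steinberg character of $G := \mathrm{GL}_{n,q}$ together with Haemers' rank bound, exactly in the spirit of the preceding discussion but with rank replacing the Hoffman ratio (since $\mathrm{St}$ is not $\mathrm{Syl}_p$-regular in general, Theorem~\ref{thm:main2} does not directly apply). Recall that the Steinberg character $\mathrm{St}$ of $G$ is an irreducible character with $\mathrm{St}(1) = q^{\binom{n}{2}} = q^{n(n-1)/2}$, which is exactly the order of the Sylow $p$-subgroup of $G$ consisting of upper unitriangular matrices, and that $\mathrm{St}(g) = 0$ for every $p$-singular $g \in G$.

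Consider the $|G| \times |G|$ complex matrix $A$ with entries $A_{g,h} := \mathrm{St}(gh^{-1})/\mathrm{St}(1)$. By the vanishing property of $\mathrm{St}$, we have $A_{g,h} = 0$ whenever $g \neq h$ and $gh^{-1}$ is $p$-singular, that is, whenever $g$ and $h$ are non-adjacent in $\Gamma_{p'}(G)$; and $A_{g,g} = 1$ for every $g \in G$. Therefore $A \in M_{\mathbb{C}}(\Gamma_{p'}(G))$, i.e., $A$ fits the $p$-regular graph over $\mathbb{C}$. A standard fact from representation theory (which is also the mechanism used implicitly when the paper writes $\mathrm{OC}_n$ and its isotypic expansion) says that for any irreducible character $\chi$ of a finite group, the matrix $(\chi(gh^{-1}))_{g,h}$ is a scalar multiple of the orthogonal projection onto the $\chi$-isotypic component of the right regular representation, hence has rank exactly $\chi(1)^2$. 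Applied to $\mathrm{St}$ this yields $\mathrm{rank}_{\mathbb{C}}(A) = \mathrm{St}(1)^2 = q^{n(n-1)} = q^{n^2 - n}$, and Haemers' bound then gives
\[
\alpha(\Gamma_{p'}(G)) \leq \mathcal{H}_{\mathbb{C}}(\Gamma_{p'}(G)) \leq q^{n^2 - n}.
\]

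For the asymptotic $q^{n^2-n} = o(|G|)$, recall the standard product formula
\[
|\mathrm{GL}_{n,q}| = \prod_{i=0}^{n-1}(q^n - q^i) = q^{n^2}\prod_{i=1}^{n}(1 - q^{-i}),
\]
and the infinite product $\prod_{i \geq 1}(1 - q^{-i})$ converges to a positive constant $c_q > 0$ (since $q \geq 2$). Hence $|G| \geq c_q \, q^{n^2}$, and the ratio $q^{n^2-n}/|G| \leq c_q^{-1} q^{-n} \to 0$ as $n \to \infty$, as required. The only genuinely nontrivial inputs are the existence and basic properties of $\mathrm{St}$ (which we cite) and the rank identity $\mathrm{rank}(A) = \chi(1)^2$; there is no real obstacle beyond stringing these together, but if any subtlety arises it will be in ensuring that Haemers' bound is applied over a field where $\mathrm{St}(1) \neq 0$, which is why we work over $\mathbb{C}$ rather than over $\mathbb{F}_p$.
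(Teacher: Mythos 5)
Your proposal is correct and follows essentially the same route as the paper: both use the vanishing of the Steinberg character on $p$-singular elements to produce a matrix fitting $\Gamma_{p'}(\mathrm{GL}_{n,q})$ (your $A$ is exactly the paper's rescaled isotypic projection $E_{\mathrm{St}}$), the rank identity $\mathrm{rank} = \mathrm{St}(1)^2 = q^{n^2-n}$, Haemers' bound, and the product formula for $|\mathrm{GL}_{n,q}|$ for the $o(\cdot)$ claim. Your remark about working over $\mathbb{C}$ is a sensible precaution but not a point the paper needed to address.
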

\begin{proof}
 It is well-known that the absolute value of the character value of $\mathrm{St}$ on an element $g$ equals the order of a Sylow subgroup of the centralizer of $g$ if $g$ has order prime to $p$, and is zero if the order of g is divisible by $p$~(see \cite{Humphreys87}, for example); therefore, the orthogonal projection matrix $E_{\mathrm{St}}$ onto the $\mathrm{St}$-isotypic component is a $|\mathrm{GL}_n(q))| \times |\mathrm{GL}_n(q))|$ matrix that fits the graph $\Gamma_{p'}(\mathrm{GL}_n(q))$ after scaling so that the diagonal entries are 1.  Then the degree of the Steinberg character is the order of a Sylow $p$-subgroup, which is isomorphic to its subgroup of unitriangular matrices. Thus we have $\mathrm{St}(1) = q^{\binom{n}{2}}$ which shows $\mathrm{rank}(E_{\mathrm{St}}) = \chi(1)^2 = q^{n^2-n}$. Haemers' bound gives 
$$\alpha(\Gamma_{p'}(\mathrm{GL}_n(q))) \leq q^{n^2-n} = o(|\mathrm{GL}_n(q)|)$$
for fixed $q$ and $n \rightarrow \infty$. Note $|\mathrm{GL}_n(q)| = q^{\binom{n}{2}}\prod_{i=1}^n (q^i-1)$ and $\prod_{i=1}^n (q^i-1) \approx q^{\binom{n+1}{2}}$.
\end{proof}

\noindent Similar bounds also hold for other finite classical groups with a Steinberg representation. 

\section{Conjectures and Future Work}
Our main result showed that K\"orner's conjecture is true for $C = 2$, but we believe that that the constant $C$ can be improved. 
We also believe the Sylow 2-subgroups of $S_n$ are precisely the extremal families for all $n \geq 2$. 
\begin{conjecture}\label{con:main}
The Sylow 2-subgroups and their double-translates are the extremal even-cycle-intersecting families of $S_n$ for all $n \geq 2$. 
\end{conjecture}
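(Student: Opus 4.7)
The plan is to prove Conjecture~\ref{con:main} in two coordinated steps: (A) sharpen Theorem~\ref{thm:main} to $|\mathcal{F}| \leq |P_n| = 2^{n - s_2(n)}$ for every $n \geq 2$, where $s_2(n)$ is the binary digit sum; and (B) show that equality forces $\mathcal{F}$ to be a double-translate of a Sylow $2$-subgroup. For (A), the proof of Theorem~\ref{thm:main} suggests seeking a class function $f$ on $S_n$ that generalizes the role of $H_n$: a non-negative rational combination of irreducible characters with $\langle f, 1\rangle_{S_n} = 1$, $f(\mathrm{id}) = |P_n|$, and $f(g) = 0$ for every $2$-singular $g$---a \emph{conic Steinberg-like class function} in the sense of Section~\ref{sec:stlike}. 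The matrix $A_f - |P_n| I$ is then a weighted adjacency matrix for $\Gamma_{2'}(S_n)$ whose extreme eigenvalues are $n! - |P_n|$ (on the trivial isotypic component) and $-|P_n|$ (on each isotypic component outside the Fourier support of $f$), so Delsarte--Hoffman immediately yields $|\mathcal{F}| \leq |P_n|$. My starting point would be $\mathrm{Ind}_{P_n}^{S_n} 1_{P_n}$, which already vanishes outside the dyadic (i.e.\ $2$-power-order) classes; a virtual character correction is then needed to annihilate its residual values on the $2$-singular non-dyadic classes, such as that of a $6$-cycle in $S_6$. Existence of the correction reduces to a feasibility problem in the Bose--Mesner algebra of the group conjugacy class association scheme, as noted in Section~\ref{sec:stlike}; I would attack it by exploiting the wreath-product decomposition $P_n \cong \prod_i (C_2 \wr \cdots \wr C_2)$ and branching rules for $\chi^\lambda|_{P_n}$, aiming for a character-theoretic identity generalizing Theorem~\ref{thm:hooks} beyond the $n = 2^\ell$ case.

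For (B), assuming (A), Delsarte--Hoffman equality places $1_\mathcal{F}$ in the direct sum of the trivial isotypic component and those outside the Fourier support of $f$---a subspace that necessarily contains every $1_{\sigma P_n \pi}$ (the general-$n$ Giannelli-type statement being another byproduct of the construction of $f$). After translating so that $\mathrm{id} \in \mathcal{F}$, I would analyze the auto-convolution $g := 1_\mathcal{F} * \check{1}_\mathcal{F}$, where $\check{1}_\mathcal{F}(x) := 1_\mathcal{F}(x^{-1})$: its support $\mathcal{F}\mathcal{F}^{-1}$ lies in $\{\mathrm{id}\} \cup \{2\text{-singular elements}\}$, it satisfies $g(\mathrm{id}) = |\mathcal{F}| = |P_n|$, and its Fourier transform $\hat g(\rho) = \hat{1}_\mathcal{F}(\rho) \hat{1}_\mathcal{F}(\rho)^{*}$ is positive semidefinite with support contained in that of $\hat{1}_\mathcal{F}$. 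A Cauchy--Schwarz/Parseval computation, $\|g\|_2^2 \geq |\mathcal{F}|^4/|\mathrm{supp}(g)|$, combined with this positivity and the structural support constraint $|\mathrm{supp}(g)| \leq |P_n|$, should force $g = |P_n| \cdot 1_H$ for a subgroup $H \leq S_n$ of order $|P_n|$, whence $\mathcal{F}$ is a coset of $H$. Since every non-identity element of $H$ is $2$-singular, $H$ is a $2$-group of order $|P_n|$, and by Sylow's theorem a Sylow $2$-subgroup.

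The principal obstacle is part (A): the Malle--Zalesski classification rules out any single irreducible Steinberg-like character of $S_n$ for $n \neq 2^\ell$, so the conic relaxation is genuinely necessary, and one must simultaneously secure non-negativity of the character coefficients, vanishing on all non-dyadic $2$-singular classes, and avoidance of the non-trivial constituents of $\mathrm{Ind}_{P_n}^{S_n} 1_{P_n}$ by the Fourier support of $f$ (so that the Sylow subgroups themselves witness tightness). I expect this to be the technical heart of the conjecture, with step (B) falling into place once the right spectral certificate is in hand.
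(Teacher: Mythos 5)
The statement you are proving is Conjecture~\ref{con:main}, which the paper explicitly leaves open --- there is no proof in the paper to compare against, and your submission is a research programme rather than a proof. Its central ingredient in part (A), the existence of a conic combination $f$ of irreducible characters with $\langle f,1\rangle=1$, $f(\mathrm{id})=|P_n|$, and $f$ vanishing on all $2$-singular classes, is precisely the relaxation of Steinberg-like characters whose existence the paper itself poses as an open question at the end of Section~\ref{sec:stlike} (and which, by Malle--Zalesski, cannot be realized by a single irreducible for $n\neq 2^\ell$). You correctly identify this as the technical heart, but you do not construct $f$ for a single $n$ that is not a power of $2$, so part (A) is not established. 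Note also that even granting such an $f$, you need the additional normalization $c_\lambda\leq\chi^\lambda(1)$ on its Fourier coefficients for the top eigenvalue of $A_f-|P_n|I$ to be $n!-|P_n|$ rather than something larger; otherwise Delsarte--Hoffman gives a weaker bound than $|P_n|$.

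Part (B) contains a concrete error. You invoke the ``structural support constraint $|\mathrm{supp}(g)|\leq |P_n|$'' for $g=1_\mathcal{F}*\check{1}_\mathcal{F}$, but nothing supplies it: the hypothesis only places $\mathcal{F}\mathcal{F}^{-1}$ inside $\{\mathrm{id}\}\cup\{2\text{-singular elements}\}$, which is a set of density $1-O(1/\sqrt{n})$ in $S_n$ by Erd\H{o}s--Tur\'an and imposes no cardinality bound; a priori $|\mathcal{F}\mathcal{F}^{-1}|$ can be as large as $|\mathcal{F}|^2$, and $\mathcal{F}\mathcal{F}^{-1}$ is not itself even-cycle-intersecting, so the upper bound from part (A) does not apply to it. The Cauchy--Schwarz/Parseval step you cite in fact yields the opposite inequality, $|\mathrm{supp}(g)|\geq |\mathcal{F}|^4/\|g\|_2^2\geq |\mathcal{F}|$, so without an independent upper bound on $|\mathrm{supp}(g)|$ you cannot conclude $g=|P_n|\cdot 1_H$, and the coset structure of $\mathcal{F}$ does not follow. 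The genuine route to uniqueness here would be to characterize the Boolean functions of measure $|P_n|/n!$ lying in the direct sum of the trivial isotypic component and the components outside the Fourier support of $f$ (the analogue of the Ellis--Friedgut--Pilpel step quoted in the odd-cycle theorem of Section~\ref{sec:proofs}); that module is very large --- for $n=2^\ell$ it already contains everything except the non-trivial hooks --- and no such characterization is known, which is presumably why the paper states uniqueness only as a conjecture even in the case $n=2^\ell$ where the sharp bound is proved.
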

\noindent For odd primes, it is unclear what the bound should be for the size of a maximum $p$-singular family of $S_n$, but for finite general linear groups, we conjecture the following.
\begin{conjecture}
Let $p \in \mathbb{N}$ be a prime and $q$ be a power of $p$. Then the Sylow $p$-subgroups and their double-translates are the extremal $p$-singular families of $\mathrm{GL}_{n,q}$ for all $n \geq 2$. 
\end{conjecture}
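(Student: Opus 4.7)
My plan is to apply the extension of Theorem~\ref{thm:main2} flagged in the paragraph after its proof: rather than require a single Steinberg-like character, it suffices to exhibit a conic combination $f = \sum_\lambda a_\lambda \chi^\lambda$ of irreducible characters of $G = \mathrm{GL}_{n,q}$ with $a_\lambda \geq 0$, trivial component $a_{\mathrm{triv}} = 1$, degree $f(1) = |P| = q^{\binom{n}{2}}$, and $f(g) = 0$ for every $p$-singular $g \neq 1$. Given such an $f$, the matrix $A_f := (f(\sigma\pi^{-1}))_{\sigma,\pi}$ acts as $\sum_\lambda a_\lambda \tfrac{|G|}{\chi^\lambda(1)} E_\lambda$, so $A_f - |P|I$ is a weighted adjacency matrix of $\Gamma_{p'}(G)$ with largest eigenvalue $|G|-|P|$ (from the trivial component) and least eigenvalue $-|P|$ (contributed by every irreducible outside the support of $f$). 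Theorem~\ref{thm:ratioInd} then gives $\alpha(\Gamma_{p'}(G)) \leq |P|$, matched by any Sylow $p$-subgroup.

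The main obstacle is therefore the construction of $f$. The Steinberg character $\mathrm{St}$ is the natural seed: it has degree $q^{\binom{n}{2}} = |P|$ and vanishes on all $p$-singular elements, but $\langle \mathrm{St}, 1\rangle = 0$. I would try to repair this by adding a nonnegative combination of other irreducibles that also vanish on $p$-singular classes. Two promising sources are (i) defect-zero characters of the principal $p$-block, which automatically vanish on $p$-singular elements, and (ii) alternating sums of Deligne--Lusztig characters attached to rational maximal tori, which are known to yield $p$-vanishing virtual characters by the cuspidal-vs-principal-series decomposition. Concretely, I would hunt for a $\mathrm{GL}_{n,q}$-analog of the sum-of-hooks identity of Theorem~\ref{thm:hooks}: a conic combination of unipotent characters whose restriction to a unitriangular Sylow subgroup is the regular representation. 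Because unipotent characters of $\mathrm{GL}_{n,q}$ are parametrized by partitions of $n$, with hook-indexed ones arising as constituents of Harish-Chandra induction from split tori, I would try to port the induction on cycles used in Theorem~\ref{thm:hooks} to an induction on Jordan-block structure via Green's character formula.

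Once $f$ is found, the Delsarte--Hoffman step is a one-line spectral calculation. For the uniqueness part of the conjecture, equality in Delsarte--Hoffman forces the characteristic vector of an extremal family $\mathcal{F}$ to lie in the direct sum of the isotypic components for the extremal eigenvalues, namely the trivial component together with those $\chi^\lambda$ with $a_\lambda = 0$. To conclude that $\mathcal{F}$ is a double-translate $\sigma P \pi$, one would verify that indicators of such double-translates span exactly this subspace and then apply a stability result analogous to those of~\cite{EllisFP11,2perm}. This is plausibly the hardest ingredient: even for $S_n$ the corresponding uniqueness statement remains only Conjecture~\ref{con:main}, and for $\mathrm{GL}_{n,q}$ the modular representation theory is substantially heavier, so genuine uniqueness will likely require a new representation-theoretic input beyond the spectral argument sketched above.
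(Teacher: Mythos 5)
The statement you are addressing is stated in the paper only as a conjecture, with no proof offered; the paper's own best result in this direction is the Haemers-rank bound $\alpha(\Gamma_{p'}(\mathrm{GL}_{n,q})) \leq q^{n^2-n} = |P|^2$, i.e.\ the \emph{square} of the conjectured value $|P| = q^{\binom{n}{2}}$. Your proposal is a reasonable research program rather than a proof, and it has two essential gaps. First, the existence of the conic combination $f$ on which everything rests is precisely the open question the paper poses at the end of Section~\ref{sec:stlike}: the Steinberg character has $\langle \mathrm{St},1\rangle = 0$, and you do not actually construct the nonnegative correction by other $p$-vanishing irreducibles --- you only name candidate sources (defect-zero characters, alternating sums of Deligne--Lusztig characters) without establishing any identity analogous to Theorem~\ref{thm:hooks}. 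It is not even known that such an $f$ exists. There is also an unstated side condition: for the row sum $|G|-|P|$ of $A_f - |P|I$ to be its largest eigenvalue you need $a_\lambda \le \chi^\lambda(1)$ for every $\lambda$ in the support of $f$, since the $\lambda$-eigenvalue of $A_f$ is $a_\lambda |G|/\chi^\lambda(1)$; without this, Delsarte--Hoffman does not apply in the form you invoke.

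Second, even granting $f$, the spectral argument yields only the upper bound $\alpha(\Gamma_{p'}(\mathrm{GL}_{n,q})) \le |P|$, whereas the conjecture asserts that the Sylow $p$-subgroups and their double-translates are \emph{exactly} the extremal families. You correctly flag that this uniqueness step is missing and would need new representation-theoretic input, but that concession means the proposal does not prove the statement: the analogous characterization is open even for $S_n$ with $p=2$ (Conjecture~\ref{con:main}), where the matching upper bound \emph{is} already known. As written, your argument should be read as a plausible strategy for the upper-bound half of the conjecture, conditional on resolving the existence question for relaxed Steinberg-like characters of $\mathrm{GL}_{n,q}$, and as leaving the extremal characterization entirely open.
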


It may be interesting to generalize Theorem~\ref{thm:2row} to other families of representations of $S_n$ along the lines of Regev's generalization of Theorem~\ref{thm:hooks}~\cite{Regev13}. Using connections between the hook and two-row Specht modules in characteristic 2 (see~\cite{Kapadia25,Murphy82,Peel71}, for example), it may be possible to give a proof of Theorem~\ref{thm:2row}  akin to Wildon's~\cite{WildonBlog}. 

Finally, is there a compelling ``oddtown" analog for permutations? For example, what is the largest size of a family $\mathcal{F} \subseteq S_n$ such that $|\pi | \equiv 1 \mod 2$ for all $\pi  \in \mathcal{F}$ and $|\sigma\pi^{-1}| \equiv 0 \mod 2$ for all $\sigma, \pi \in \mathcal{F}$?

\subsection*{Acknowledgements}

We thank J\'anos K\"orner, William Linz, and Yuval Wigderson for helpful discussions that improved the exposition of the paper, and John Byrne for bringing reversing families of permutations to our attention.

\bibliographystyle{plain}
\bibliography{sample.bib}

@article{ellis2019,
      title={Setwise intersecting families of permutations}, 
      author={David Ellis},
      year={2019},
      publisher={arXiv},
      journal={arXiv:1106.0725}
}

@article{Haemers79,
title = "On some problems of {L}ov{\'a}sz concerning the {S}hannon capacity of a graph",
author = "W.H. Haemers",
year = "1979",
language = "English",
volume = "25",
pages = "231--232",
journal = "IEEE Transactions on Information Theory",
issn = "0018-9448",
publisher = "Institute of Electrical and Electronics Engineers Inc.",
}

@article{MeagherR21,
author = {Meagher, Karen and Razafimahatratra, Andriaherimanana Sarobidy},
year = {2021},
month = {10},
pages = {},
title = {The {E}rd{\H{o}}s--{K}o--{R}ado Theorem for 2-Pointwise and 2-Setwise Intersecting Permutations},
volume = {28},
journal = {The Electronic Journal of Combinatorics},
doi = {10.37236/9556}
}

@article{2perm,
     author = {Gilad Chase and Neta Dafni and Yuval Filmus and Nathan Lindzey},
     title = {Uniqueness for 2-Intersecting Families of Permutations and Perfect Matchings},
     journal = {Algebraic Combinatorics (Accepted)},
     year = {2026},
     publisher = {The Combinatorics Consortium},
}

@article{HarcosS20,
author = {Harcos, Gergely and Solt\'{e}sz, Daniel},
title = {New Bounds on Even Cycle Creating Hamiltonian Paths Using Expander Graphs},
year = {2020},
issue_date = {Jun 2020},
publisher = {Springer-Verlag},
address = {Berlin, Heidelberg},
volume = {40},
number = {3},
issn = {0209-9683},
url = {https://doi.org/10.1007/s00493-020-4204-z},
doi = {10.1007/s00493-020-4204-z},
abstract = {We say that two graphs on the same vertex set are G-creating if their union (the union of their edges) contains G as a subgraph. Let Hn(G) be the maximum number of pairwise G-creating Hamiltonian paths of Kn. Cohen, Fachini and K\"{o}rner proved In this paper we close the superexponential gap between their lower and upper bounds by proving We also improve the previously established upper bounds on {enH}n({enC}2k) for k >3, and we present a small improvement on the lower bound of Furedi, Kantor, Monti and Sinaimeri on the maximum number of so-called pairwise reversing permutations. One of our main tools is a theorem of Krivelevich, which roughly states that (certain kinds of) good expanders contain many Hamiltonian paths.},
journal = {Combinatorica},
month = jun,
pages = {435–454},
numpages = {20},
keywords = {05C35, 05D99}
}

@article{ErdosT67b,
  title={On Some Problems of a Statistical Group-Theory. II},
  author={Paul Erd{\H{o}}s and Paul Tur{\'a}n},
  journal={Acta Mathematica Academiae Scientiarum Hungarica},
  year={1967},
  volume={18},
  pages={151-163},
  url={https://api.semanticscholar.org/CorpusID:14377503}
}

@article{SiemonsZ19,
title = "Remarks on singular Cayley graphs and vanishing elements of simple groups",
abstract = "Let Γ be a finite graph and let A(Γ) be its adjacency matrix. Then Γ is singular if A(Γ) is singular. The singularity of graphs is of certain interest in graph theory and algebraic combinatorics. Here we investigate this problem for Cayley graphs Cay(G,H) when G is a finite group and when the connecting set H is a union of conjugacy classes of G. In this situation, the singularity problem reduces to finding an irreducible character χ of G for which ∑h∈Hχ(h)=0. At this stage, we focus on the case when H is a single conjugacy class hG of G; in this case, the above equality is equivalent to χ(h)=0 . Much is known in this situation, with essential information coming from the block theory of representations of finite groups. An element h∈G is called vanishing if χ(h)=0 for some irreducible character χ of G. We study vanishing elements mainly in finite simple groups and in alternating groups in particular. We suggest some approaches for constructing singular Cayley graphs.",
keywords = "Singular Cayley graphs, Vertex transitive graphs, Vanishing elements, Block theory of symmetric and alternating groups",
author = "Johannes Siemons and Alexandre Zalesski",
year = "2019",
month = dec,
doi = "10.1007/s10801-018-0860-0",
language = "English",
volume = "50",
pages = "379--401",
journal = "Journal of Algebraic Combinatorics",
issn = "0925-9899",
publisher = "Springer",
number = "4",
}

@article{Renteln25,
title = {Counting Sylow double cosets in the symmetric group},
journal = {Discrete Mathematics},
volume = {347},
number = {10},
pages = {114109},
year = {2024},
issn = {0012-365X},
doi = {https://doi.org/10.1016/j.disc.2024.114109},
url = {https://www.sciencedirect.com/science/article/pii/S0012365X24002401},
author = {Paul Renteln},
keywords = {Symmetric group, Sylow--subgroup, Double cosets, Cycle index, Wreath product},
abstract = {An algorithm is developed for calculating the number of double cosets P﹨Sn/P, where P is a Sylow-p-subgroup of the symmetric group Sn. Several examples of its use are given.}
}

@article{StaceyO23,
     author = {Law, Stacey and Okitani, Yuji},
     title = {On plethysms and {Sylow} branching coefficients},
     journal = {Algebraic Combinatorics},
     pages = {321--357},
     year = {2023},
     publisher = {The Combinatorics Consortium},
     volume = {6},
     number = {2},
     doi = {10.5802/alco.262},
     language = {en},
     url = {https://alco.centre-mersenne.org/articles/10.5802/alco.262/}
}

@article{Chen24,
author = {Chen, William Y.C.},
journal = {Enumerative combinatorics and applications},
number = {3},
organization = {Tianjin University},
publisher = {University of Haifa, Department of Mathematics},
title = {Breaking cycles, the odd versus the even},
volume = {4},
year = {2024},
}

@phdthesis{Kapadia25,
author = {Zain Ahmed Kapadia},
title = {On the Submodule Structure of Hook Specht Modules in Characteristic 2},
school = {Queen Mary University of London},
year = {2025}
}

@article{Murphy82,
title = {Submodule structure of some Specht modules},
journal = {Journal of Algebra},
volume = {74},
number = {2},
pages = {524-534},
year = {1982},
issn = {0021-8693},
doi = {https://doi.org/10.1016/0021-8693(82)90038-2},
url = {https://www.sciencedirect.com/science/article/pii/0021869382900382},
author = {Gwendolen Murphy}
}

@article{Peel71, title={Hook representations of the symmetric groups}, volume={12}, DOI={10.1017/S0017089500001245}, number={2}, journal={Glasgow Mathematical Journal}, publisher={Cambridge University Press}, author={Peel, M. H.}, year={1971}, pages={136–149}}

@book{GodsilMeagher,
	title={Erd{\H{o}}s--Ko--Rado Theorems: Algebraic Approaches},
	author={Godsil, C. and Meagher, K.},
	isbn={9781107128446},
	series={Cambridge Studies in Advanced Mathematics},
	url={https://books.google.ca/books?id=P0XjCgAAQBAJ},
	year={2015},
	publisher={Cambridge University Press}
}

@article{Giannelli17,
author = {Giannelli, Eugenio},
title = {Characters of odd degree of symmetric groups},
journal = {Journal of the London Mathematical Society},
volume = {96},
number = {1},
pages = {1-14},
keywords = {20C30 (primary), 20C15 (secondary)},
doi = {https://doi.org/10.1112/jlms.12048},
url = {https://londmathsoc.onlinelibrary.wiley.com/doi/abs/10.1112/jlms.12048},
abstract = {Abstract We construct a natural bijection between odd-degree irreducible characters of S2n and linear characters of its Sylow 2-subgroup P2n. We show that such a bijection is nicely induced by the restriction functor. We conclude with a characterization of the irreducible characters χ of Sn such that the restriction of χ to Pn has a unique linear constituent.},
year = {2017}
}

@article{DiaconisGGLNSS25,
title = {On the number and sizes of double cosets of Sylow subgroups of the symmetric group},
journal = {Journal of Algebra},
volume = {689},
pages = {62-86},
year = {2026},
issn = {0021-8693},
doi = {https://doi.org/10.1016/j.jalgebra.2025.09.036},
url = {https://www.sciencedirect.com/science/article/pii/S0021869325005824},
author = {Persi Diaconis and Eugenio Giannelli and Robert M. Guralnick and Stacey Law and Gabriel Navarro and Benjamin Sambale and Hunter Spink},
keywords = {Symmetric groups, Sylow subgroups, Double cosets},
abstract = {Let Pn be a Sylow p-subgroup of the symmetric group Sn. We investigate the number and sizes of the Pn∖Sn/Pn double cosets, showing that ‘most’ double cosets have maximal size when p is odd, or equivalently, that Pn∩Pnx=1 for most x∈Sn when n is large. We also find that all possible sizes of such double cosets occur, modulo a list of small exceptions.}
}

@article{MalleZ20,
url = {https://doi.org/10.1515/jgth-2019-0024},
title = {Steinberg-like characters for finite simple groups},
author = {Gunter Malle and Alexandre Zalesski},
pages = {25--78},
volume = {23},
number = {1},
journal = {Journal of Group Theory},
doi = {doi:10.1515/jgth-2019-0024},
year = {2020},
lastchecked = {2025-04-25}
}

@article{Taylor17,
title = {A note on skew characters of symmetric groups},
author = {Jay Taylor},
pages = {435--43},
volume = {221},
journal = {Israel Journal of Mathematics},
year = {2017},
}

@article{Regev13,
title = "Lie superalgebras and some characters of {$S_n$}",
abstract = "We prove a formula for S n characters which are indexed by the partitions in the (k, ℓ) hook. The proof applies a combinatorial part of the theory of Lie superalgebras [1].",
author = "Amitai Regev",
year = "2013",
month = jun,
doi = "10.1007/s11856-012-0095-z",
language = "English",
volume = "195",
pages = "31--35",
journal = "Israel Journal of Mathematics",
issn = "0021-2172",
publisher = "Springer New York",
number = "1",
}

@article{Ebrahimi26,
title = {Singular Cayley graphs and p-blocks of finite groups},
journal = {Journal of Algebra},
volume = {687},
pages = {477-491},
year = {2026},
issn = {0021-8693},
doi = {https://doi.org/10.1016/j.jalgebra.2025.08.038},
url = {https://www.sciencedirect.com/science/article/pii/S0021869325005241},
author = {Mahdi Ebrahimi},
keywords = {Cayley graph, -block, Singular graph, Nullity, Energy},
abstract = {For a simple finite graph Γ, the multiplicity of the eigenvalue 0 of the adjacency matrix of Γ is called the nullity of Γ. The energy of Γ is defined as the sum of the absolute values of its eigenvalues. In this paper, we apply the block theory of finite groups to study the Cayley graph Γp(G) defined on a finite group G in which its connecting set consists of p-singular elements of G. We use this Cayley graph to investigate several methods for constructing singular graphs. Then we assume that G is p-solvable and obtain some nice restrictions on the structure of Γp(G). We first achieve an explicit formula for the nullity of Γp(G). In addition, we find a lower bound for the energy of Γp(G). Finally, we prove that the diameter of Γp(G) is at most |G|p.}
}

@MISC {HopkinsMO,
    TITLE = {A bijection on permutations},
    AUTHOR = {Sam Hopkins},
    HOWPUBLISHED = {MathOverflow},
  NOTE = {(version: 2021-12-29)},
  URL = {https://mathoverflow.net/q/412731}
}

@article{Eberhard25,
url = {https://doi.org/10.1515/jgth-2025-0094},
title = {Intersections of Sylow 2-subgroups in symmetric groups},
author = {Sean Eberhard},
journal = {Journal of Group Theory},
doi = {doi:10.1515/jgth-2025-0094},
year = {Accepted for publication},
lastchecked = {2025-11-03}
}

@article{Berlekamp69, title={On Subsets with Intersections of Even Cardinality}, volume={12}, DOI={10.4153/CMB-1969-059-3}, number={4}, journal={Canadian Mathematical Bulletin}, author={Berlekamp, E.R.}, year={1969}, pages={471–474}}

@article{ErnstS23, title={Intersection theorems for finite general linear groups}, volume={175}, DOI={10.1017/S0305004123000075}, number={1}, journal={Mathematical Proceedings of the Cambridge Philosophical Society}, author={Ernst, Alena and Schmidt, Kai-Uwe}, year={2023}, pages={129–160}}

@book{BF92,
  author = {Babai, L. and Frankl, P.},
  title = {Linear algebra methods in combinatorics with applications to geometry and computer science},
  year = {1992},
  publisher = {The University of Chicago}
}

@article{KellerLMS24,
title = {On t-intersecting families of permutations},
journal = {Advances in Mathematics},
volume = {445},
pages = {109650},
year = {2024},
issn = {0001-8708},
doi = {https://doi.org/10.1016/j.aim.2024.109650},
url = {https://www.sciencedirect.com/science/article/pii/S0001870824001658},
author = {Nathan Keller and Noam Lifshitz and Dor Minzer and Ohad Sheinfeld},
keywords = {Intersection problems, Erdős-Ko-Rado, Permutations, Hypercontractivity for global functions, Ahlswede-Khachatrian, Forbidden intersection, -intersecting},
abstract = {We prove that there exists a constant c0 such that for any t∈N and any n≥c0t, if A⊂Sn is a t-intersecting family of permutations then |A|≤(n−t)!. Furthermore, if |A|≥0.75(n−t)! then there exist i1,…,it and j1,…,jt such that σ(i1)=j1,…,σ(it)=jt holds for any σ∈A. This shows that the conjectures of Deza and Frankl (1977) and of Cameron (1988) on t-intersecting families of permutations hold for all t≤c0n. Our proof method, based on hypercontractivity for global functions, does not use the specific structure of permutations, and applies in general to t-intersecting sub-families of ‘pseudorandom’ families in {1,2,…,n}n, like Sn.}
}

@article{KupavskiiZ24,
title = {Spread approximations for forbidden intersections problems},
journal = {Advances in Mathematics},
volume = {445},
pages = {109653},
year = {2024},
issn = {0001-8708},
doi = {https://doi.org/10.1016/j.aim.2024.109653},
url = {https://www.sciencedirect.com/science/article/pii/S0001870824001683},
author = {Andrey Kupavskii and Dmitrii Zakharov},
keywords = {Intersecting family, Spread approximation, Extremal set theory, Erdos-Ko-Rado},
abstract = {We develop a new approach to approximate families of sets, complementing the existing ‘Δ-system method’ and ‘junta approximations method’. The approach, which we refer to as ‘spread approximations method’, is based on the notion of r-spread families and builds on the recent breakthrough result of Alweiss, Lovett, Wu and Zhang for the Erdős–Rado ‘Sunflower Conjecture’. Our approach can work in a variety of sparse settings. To demonstrate the versatility and strength of the approach, we present several of its applications to forbidden intersection problems, including bounds on the size of regular intersecting families, the resolution of the Erdős–Sós problem for sets in a new range and, most notably, the resolution of the t-intersection and Erdős–Sós problems for permutations in a new range. Specifically, we show that any collection of permutations of an n-element set with no two permutations intersecting in at most (exactly) t−1 elements has size at most (n−t)!, provided t⩽n1−ϵ (t⩽n13−ϵ) for an arbitrary ϵ>0 and n>n0(ϵ). Previous results for these problems only dealt with the case of fixed t. The proof follows the structure vs. randomness philosophy, which proved to be very efficient in proving results throughout mathematics and computer science.}
}

@article{DafniFLLV26,
	author    = {Neta Dafni and
	Yuval Filmus and
	Noam Lifshitz and
	Nathan Lindzey and
	Marc Vinyals},
	title     = {Complexity Measures on the Symmetric Group and Beyond},
	journal = {Combinatorial Theory},
	year = {2026},
	note = {Accepted for publication.}
}

@article{EllisFP11,
  author    = {D. Ellis and E. Friedgut and H. Pilpel},
  title     = {Intersecting families of permutations},
  journal   = {J. Amer. Math. Soc.},
  volume    = {24},
  year      = {2011},
  pages     = {649-682}
}

@article{KuLW16,
author = {Ku, Cheng Yeaw and Lau, Terry and Wong, Kok Bin},
title = {Largest independent sets of certain regular subgraphs of the derangement graph},
year = {2016},
issue_date = {August    2016},
publisher = {Kluwer Academic Publishers},
address = {USA},
volume = {44},
number = {1},
issn = {0925-9899},
url = {https://doi.org/10.1007/s10801-015-0656-4},
doi = {10.1007/s10801-015-0656-4},
abstract = {The derangement graph is the Cayley graph on the symmetric group $$mathcal {S}_{n}$$Sn whose generating set $$D_{n}$$Dn is the set of permutations on $$[n]={1, ldots , n}$$[n]={1,\'{z},n} without any 1-cycle. For any fixed positive integer $$k le n$$k≤n, the Cayley graph generated by the subset of $$D_{n}$$Dn consisting of permutations without any i-cycles for all $$1 le i le k$$1≤i≤k is a regular subgraph of the derangement graph. In this paper, we determine the smallest eigenvalue of these subgraphs and show that the set of all the largest independent sets in these subgraphs is equal to the set of all the largest independent sets in the derangement graph, provided n is sufficiently large in terms of k.},
journal = {J. Algebraic Comb.},
month = aug,
pages = {81–98},
numpages = {18}
}

@article{FilmusL24,
author = {Filmus, Yuval and Lindzey, Nathan},
title = {A note on “Largest independent sets of certain regular subgraphs of the derangement graph”},
year = {2024},
issue_date = {May 2024},
publisher = {Kluwer Academic Publishers},
address = {USA},
volume = {59},
number = {3},
issn = {0925-9899},
url = {https://doi.org/10.1007/s10801-024-01304-3},
doi = {10.1007/s10801-024-01304-3},
abstract = {Let Dn,k be the set of all permutations of the symmetric group Sn that have no cycles of length i for all 1≤i≤k. In the paper mentioned above, Ku, Lau, and Wong prove that the set of all the largest independent sets of the Cayley graph Cay(Sn,Dn,k) is equal to the set of all the largest independent sets in the derangement graph Cay(Sn,Dn,1), provided n is sufficiently large in terms of k. We give a simpler proof that holds for all n,&nbsp;k and also applies to the alternating group.},
journal = {J. Algebraic Comb.},
month = feb,
pages = {575–579},
numpages = {5},
keywords = {Representation theory, Symmetric group, Alternating group, Cayley graphs, Derangements, Erdos–Ko–Rado combinatorics}
}

@MISC {WildonBlog,
    TITLE = {The sum of all hook characters},
    AUTHOR = {Mark Wildon},
    HOWPUBLISHED = {Wildon's Weblog},
  URL = {https://wildonblog.wordpress.com/2019/04/02/character-values-on-the-sum-of-hook-partitions/}
}

@MISC {WildonBlog2,
    TITLE = {Sylow subgroups of symmetric groups},
    AUTHOR = {Mark Wildon},
    HOWPUBLISHED = {Wildon's Weblog},
  URL = {https://wildonblog.wordpress.com/2016/11/27/sylow-subgroups-of-symmetric-groups/}
}

@MISC {YuanMO,
    TITLE = {Permutations with all cycles odd length and permutations with all cycles even length},
    AUTHOR = {Qiaochu Yuan},
    HOWPUBLISHED = {MathOverflow},
    NOTE = {(version: 2015-05-19)},
 URL = {https://mathoverflow.net/q/207062}
}

@article{Humphreys87,
author = {J. E. Humphreys},
title = {{The Steinberg representation}},
volume = {16},
journal = {Bulletin (New Series) of the American Mathematical Society},
number = {2},
publisher = {American Mathematical Society},
pages = {247 -- 263},
year = {1987},
}

@book{Diaconis88,
	added-at = {2010-02-04T21:40:41.000+0100},
	address = {Hayward, CA},
	author = {Diaconis, Persi},
	biburl = {http://www.bibsonomy.org/bibtex/2933838b3e918380eb835383259944baf/peter.ralph},
	description = {MR: Publications results for "MR Number=(964069)"},
	interhash = {f8e3d552ac7a2a48d97ec0820620e592},
	intrahash = {933838b3e918380eb835383259944baf},
	isbn = {0-940600-14-5},
	keywords = {discrete_fourier_transform reference representation_theory},
	mrclass = {60-02 (20C99 62-02)},
	mrnumber = {MR964069 (90a:60001)},
	mrreviewer = {Philippe Bougerol},
	pages = {vi+198},
	publisher = {Institute of Mathematical Statistics},
	series = {Institute of Mathematical Statistics Lecture Notes---Monograph               Series, 11},
	timestamp = {2010-02-04T21:40:41.000+0100},
	title = {Group {R}epresentations in {P}robability and {S}tatistics},
	url = {http://projecteuclid.org/euclid.lnms/1215467407},
	year = 1988
}

@article{ByrneT24,
title = {Improved upper bounds on even-cycle creating Hamilton paths},
journal = {Discrete Mathematics},
volume = {347},
number = {10},
pages = {114107},
year = {2024},
issn = {0012-365X},
doi = {https://doi.org/10.1016/j.disc.2024.114107},
url = {https://www.sciencedirect.com/science/article/pii/S0012365X24002383},
author = {John Byrne and Michael Tait},
keywords = {Pseudo-random graph, Hamilton cycles, Reversing permutations},
abstract = {We study the function Hn(C2k), the maximum number of Hamilton paths such that the union of any pair of them contains C2k as a subgraph. We give upper bounds on this quantity for k≥3, improving results of Harcos and Soltész, and we show that if a conjecture of Ustimenko is true then one additionally obtains improved upper bounds for all k≥6. We also give bounds on Hn(K2,3) and Hn(K2,4). In order to prove our results, we extend a theorem of Krivelevich which counts Hamilton cycles in (n,d,λ)-graphs to bipartite or irregular graphs, and then apply these results to generalized polygons and the constructions of Lubotzky-Phillips-Sarnak and Füredi.}
}

@article{Cibulka13,
author = {Cibulka, Josef},
title = {Maximum Size of Reverse-Free Sets of Permutations},
journal = {SIAM Journal on Discrete Mathematics},
volume = {27},
number = {1},
pages = {232-239},
year = {2013},
doi = {10.1137/120888168},

URL = { 
    
        https://doi.org/10.1137/120888168
    
    

},
eprint = { 
    
        https://doi.org/10.1137/120888168
    
    

}
,
    abstract = { Two words have a reverse if they have the same pair of distinct letters on the same pair of positions, but in reversed order. A set of words no two of which have a reverse is said to be reverse-free. Let \$F(n,k)\$ be the maximum size of a reverse-free set of words from \$[n]^k\$, where no letter repeats within a word. We show the following lower and upper bounds in the case \$n \ge k\$: \$F(n,k) \in n^k k^{-k/2 + O(k /\log k)}\$. As a consequence of the lower bound, a set of \$n\$-permutations, each two having a reverse, has size at most \$n^{n/2 + O (n/\log n)}\$. }
}

@article{Ellis11,
title = {Setwise intersecting families of permutations},
journal = {Journal of Combinatorial Theory, Series A},
volume = {119},
number = {4},
pages = {825-849},
year = {2012},
issn = {0097-3165},
doi = {https://doi.org/10.1016/j.jcta.2011.12.003},
url = {https://www.sciencedirect.com/science/article/pii/S0097316511001920},
author = {David Ellis},
keywords = {Intersecting families of permutations, Erdős–Ko–Rado, Representation theory},
abstract = {A family of permutations A⊂Sn is said to be t-set-intersecting if for any two permutations σ,π∈A, there exists a t-set x whose image is the same under both permutations, i.e. σ(x)=π(x). We prove that if n is sufficiently large depending on t, the maximum-sized t-set-intersecting families of permutations in Sn are cosets of stabilizers of t-sets. The t=2 case of this was conjectured by János Körner. It can be seen as a variant of the Deza–Frankl conjecture, proved in Ellis, Friedgut and Pilpel (2011) [3]. Our proof uses similar techniques to those of Ellis, Friedgut and Pilpel (2011) [3], namely, eigenvalue methods, together with the representation theory of the symmetric group, but the combinatorial part of the proof is harder.}
}

@article{FurediKMS10,
author = {F\"{u}redi, Zoltan and Kantor, Ida and Monti, Angelo and Sinaimeri, Blerina},
title = {On Reverse-Free Codes and Permutations},
journal = {SIAM Journal on Discrete Mathematics},
volume = {24},
number = {3},
pages = {964-978},
year = {2010},
doi = {10.1137/090774835},

URL = { 
    
        https://doi.org/10.1137/090774835
    
    

},
eprint = { 
    
        https://doi.org/10.1137/090774835

}
}

@article{FurediKMS11,
title = {Reverse-free codes and permutations},
journal = {Electronic Notes in Discrete Mathematics},
volume = {38},
pages = {383-387},
year = {2011},
note = {The Sixth European Conference on Combinatorics, Graph Theory and Applications, EuroComb 2011},
issn = {1571-0653},
doi = {https://doi.org/10.1016/j.endm.2011.09.062},
url = {https://www.sciencedirect.com/science/article/pii/S1571065311001314},
author = {Zoltán Füredi and Ida Kantor and Angelo Monti and Blerina Sinaimeri},
keywords = {extremal combinatorics, permutations, codes, reverse-free},
abstract = {Two codewords (a1,…,ak) and (b1,…,bk) form a reverse-free pair if (ai,aj)≠(bj,bi) holds whenever 1⩽i<j⩽k are indices such that ai≠aj. In a reverse-free code, each pair of codewords is reverse-free. The maximum size of a reverse-free code with codewords of length k and an n-element alphabet is denoted by F¯(n,k). Let F(n,k) denote the maximum size of a reverse-free code with all codewords consisting of distinct entries. We determine F¯(n,3) and F¯(n,3) exactly whenever n is a power of 3, and asymptotically for other values of n. We prove non-trivial bounds for F(n,k) and F¯(n,k) for general k and for other related functions as well. Using VC-dimension of a matrix, we determine the order of magnitude of F¯(n,k) for n fixed and k tending to infinity.}
}

@article{CohenFK19,
author = {Cohen, G\'{e}rard and Fachini, Emanuela and K\"{o}rner, J\'{a}nos},
title = {Interlocked Permutations},
journal = {SIAM Journal on Discrete Mathematics},
volume = {33},
number = {3},
pages = {1662-1668},
year = {2019},
doi = {10.1137/18M1200683},

URL = { 
    
        https://doi.org/10.1137/18M1200683
},
eprint = { 
    
        https://doi.org/10.1137/18M1200683
}
}

\end{document}